\renewcommand*\subjclass[2][2010]{%
  \def\@subjclass{#2}%
  \@ifundefined{subjclassname@#1}{%
    \ClassWarning{\@classname}{Unknown edition (#1) of Mathematics
      Subject Classification; using '2010'.}%
  }{%
    \@xp\let\@xp\subjclassname\csname subjclassname@#1\endcsname
  }%
}
\newtheorem{theorem}{Theorem}[section]
\newtheorem{lemma}[theorem]{Lemma}
\newtheorem{corollary}[theorem]{Corollary}
\newtheorem{remark}[theorem]{Remark}
\theoremstyle{definition}
\renewcommand*\subjclass[2][2010]{%
  \def\@subjclass{#2}%
  \@ifundefined{subjclassname@#1}{%
    \ClassWarning{\@classname}{Unknown edition (#1) of Mathematics
      Subject Classification; using '1991'.}%
  }{%
    \@xp\let\@xp\subjclassname\csname subjclassname@#1\endcsname
  }%
}
\begin{document}

\title[On the mod $p^2$ determination of  
$\sum_{k=1}^{p-1}H_k/(k\cdot 2^k)$...]{On the mod $p^2$ determination 
of $\sum_{k=1}^{p-1}H_k/(k\cdot 2^k)$: another 
proof of  a  conjecture by Sun}

\author{Romeo Me\v strovi\' c}
\address{Department of Mathematics,
Maritime Faculty, University of Montenegro, 
Dobrota 36, 85330 Kotor, Montenegro 
romeo@ac.me}

\begin{abstract}
 For a positive integer $n$ let $H_n=\sum_{k=1}^{n}1/k$
be the $n$th  harmonic number.
Z. W. Sun conjectured  
that for any prime $p\ge 5$, 
   $$
\sum_{k=1}^{p-1}\frac{H_k}{k\cdot 2^k}
\equiv\frac{7}{24}pB_{p-3}\pmod{p^2}.
   $$
This conjecture is recently confirmed by  Z. W. Sun and L. L. Zhao.
In this note we give another proof 
of the above congruence by establishing 
congruences for all the sums of the form 
$\sum_{k=1}^{p-1}2^{\pm k}H_k^r/k^s \,(\bmod{\, p^{4-r-s}})$ with 
$(r,s)\in\{(1,1),(1,2),(2,1) \}$.    
   \end{abstract}
  \maketitle
{\renewcommand{\thefootnote}{}\footnote{2010 {\it Mathematics Subject 
Classification.} Primary 11B75; Secondary 11A07, 11B68, 05A19, 05A10.

{\it Keywords and phrases.} Harmonic numbers of order $m$, congruence,
 Bernoulli numbers.}
\setcounter{footnote}{0}}

\section{The Main Results}

Given  positive integers $n$ and $m$, the 
{\it harmonic numbers of order} $m$ are those rational numbers
$H_{n,m}$ defined as 
  $$
H_{n,m}=\sum_{k=1}^{n}\frac{1}{k^m}.
  $$
For simplicity, we will denote by
 $$
H_n:=H_{n,1}=\sum_{k=1}^n\frac{1}{k}
  $$   
the $n$th  {\it harmonic number} (in addition, we define $H_0=0$).

Recently, Z. W. Sun \cite{s}  obtained  basic congruences modulo a prime
$p\ge 5$ for several sums of terms involving harmonic numbers.
In particular,  Sun established
$\sum_{k=1}^{p-1}H_k^r\,(\bmod{\, p^{4-r}})$ for $r=1,2,3$. 
Further generalizations of these congruences have been recently obtained by 
Tauraso in \cite{t}. 

Recall that Bernoulli numbers $B_0,B_1,B_2,\ldots$ are  given by
   $$
B_0=1\quad\mathrm{and}\quad \sum_{k=0}^{n}{n+1\choose k}B_k=0\,\, 
(n=1,2,3,\ldots).
  $$

In this note we establish  six congruences  
involving harmonic numbers contained in the following result.
% Theorem 1.1
\begin{theorem}\label{t1.1}  Let $p>5$ be a prime. Then
  \begin{equation}\label{con2}
\sum_{k=1}^{p-1}\frac{2^kH_k}{k}\equiv -q_p(2)^2+
\frac{2}{3}pq_p(2)^3+\frac{p}{12}B_{p-3}\pmod{p^2},
   \end{equation}
 \begin{equation}\label{con3}
\sum_{k=1}^{p-1}\frac{2^kH_k}{k^2}\equiv -
\frac{1}{3}q_p(2)^3+\frac{23}{24}B_{p-3}\pmod{p},
   \end{equation}
\begin{equation}\label{con4}
\sum_{k=1}^{p-1}\frac{H_k}{k^2\cdot 2^k}\equiv \frac{5}{8}B_{p-3}\pmod{p},
   \end{equation}
\begin{equation}\label{con5}
\sum_{k=1}^{p-1}\frac{2^kH_k^2}{k}\equiv -\frac{1}{3}q_p(2)^3
+\frac{11}{24}B_{p-3}\pmod{p},
   \end{equation}
\begin{equation}\label{con5.1}
\sum_{k=1}^{p-1}\frac{H_k^2}{k\cdot 2^k}\equiv 
\frac{7}{8}B_{p-3}\pmod{p}
   \end{equation}
and
\begin{equation}\label{con6}
\sum_{k=1}^{p-1}\frac{2^kH_{k,2}}{k}\equiv -\frac{1}{3}q_p(2)^3
-\frac{25}{24}B_{p-3}\pmod{p}.
   \end{equation}
\end{theorem}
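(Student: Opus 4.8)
The plan is to reduce all six congruences to a short list of classical inputs --- Glaisher's evaluations of $\sum_{k=1}^{p-1}1/k^{m}$ modulo $p^{3}$ for $m\le 4$ (in particular $H_{p-1}\equiv-\tfrac13p^{2}B_{p-3}\pmod{p^{3}}$ and $\sum_{k=1}^{p-1}1/k^{2}\equiv\tfrac23pB_{p-3}\pmod{p^{2}}$), the known evaluations of $\sum_{k=1}^{p-1}2^{\pm k}/k^{s}$ for $s=1,2,3$ in terms of $q_p(2)$ and $B_{p-3}$, and Sun's harmonic-number congruences from \cite{s} --- by means of a family of finite ``logarithm-type'' identities. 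The idea is to expand a product such as $\bigl(\sum_{k=1}^{p-1}x^{k}/k\bigr)^{a}$ or $\bigl(\sum_{k=1}^{p-1}x^{k}/k\bigr)\bigl(\sum_{k=1}^{p-1}x^{k}/k^{2}\bigr)$ and read off the coefficient of $x^{m}$, which for $2\le m\le p-1$ is an explicit rational combination of $H_{m-1}$, $H_{m-1}^{2}$, $H_{m-1,2}$, $1/m$ and $1/m^{2}$; then set $x=2$. In the resulting identity the ``diagonal'' part $\sum_{2\le m\le p-1}$ is, up to known corrections, exactly the sum we are after, the full product is a quantity we already control, and only the ``overflow'' part $\sum_{p\le m\le 2p-2}$ needs work --- and there the relation $H_{p-1}\equiv 0\pmod{p^{2}}$ produces a drastic simplification.

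For \eqref{con2} the relevant identity, from squaring $\sum_{k=1}^{p-1}x^{k}/k$, reads
\[
\sum_{k=1}^{p-1}\frac{2^{k}H_k}{k}=\frac12\Bigl(\sum_{k=1}^{p-1}\frac{2^{k}}{k}\Bigr)^{2}+\sum_{k=1}^{p-1}\frac{2^{k}}{k^{2}}-\frac12\sum_{l=0}^{p-2}\frac{2(H_{p-1}-H_l)}{p+l}\,2^{p+l}.
\]
In the overflow sum one substitutes $H_{p-1}-H_l\equiv-H_l\pmod{p^{2}}$, $1/(p+l)\equiv 1/l-p/l^{2}\pmod{p^{2}}$ for $l\ge1$, and $2^{p+l}=2^{l+1}\bigl(1+pq_p(2)\bigr)$, handling the term $l=0$ separately via $H_{p-1}/p\equiv-\tfrac13pB_{p-3}\pmod{p^{2}}$; this rewrites it as a combination of $\sum_{k=1}^{p-1}2^{k}H_k/k$ itself (but only to one lower power of $p$), of $\sum_{k=1}^{p-1}2^{k}H_k/k^{2}$, and of explicit $B_{p-3}$- and $q_p(2)$-terms. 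Reading the whole relation first modulo $p$ --- where the $p$-contributions vanish and the overflow collapses to $-4\sum_{k=1}^{p-1}2^{k}H_k/k$ --- gives at once $\sum_{k=1}^{p-1}2^{k}H_k/k\equiv-\tfrac12\bigl(\sum_{k=1}^{p-1}2^{k}/k\bigr)^{2}-\sum_{k=1}^{p-1}2^{k}/k^{2}\equiv-q_p(2)^{2}\pmod p$; this value, together with the mod-$p$ congruence \eqref{con3} established next, is then fed back into the overflow sum to upgrade the relation to \eqref{con2} modulo $p^{2}$.

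The mod-$p$ congruences \eqref{con3}, \eqref{con5}, \eqref{con6} are obtained the same way from further products --- $\bigl(\sum x^{k}/k\bigr)\bigl(\sum x^{k}/k^{2}\bigr)$, $\bigl(\sum x^{k}/k\bigr)^{3}$, $\bigl(\sum x^{k}/k\bigr)\bigl(\sum x^{k}H_k/k\bigr)$ at $x=2$ --- each yielding, modulo $p$, a linear relation among $\sum 2^{k}H_k/k^{2}$, $\sum 2^{k}H_k^{2}/k$, $\sum 2^{k}H_{k,2}/k$, the sum in \eqref{con2}, and the known $\sum 2^{\pm k}/k^{s}$; the identity $\sum_{j=1}^{k}H_j/j=\tfrac12(H_k^{2}+H_{k,2})$ lets one pass between $H_k^{2}$ and $H_{k,2}$, and Sun's congruences from \cite{s} for the analogous sums without the factor $2^{k}$, brought in through the reflection $k\mapsto p-k$, supply any relation still missing; solving gives \eqref{con3}, \eqref{con5}, \eqref{con6}. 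Finally, the two ``minus'' sums follow from the reflection alone: using $H_{p-k}\equiv H_{k-1}\pmod p$, $(p-k)^{-s}\equiv(-1)^{s}k^{-s}\pmod p$ and $2^{-(p-k)}\equiv 2^{k-1}\pmod p$ one obtains
\[
\sum_{k=1}^{p-1}\frac{H_k}{k^{2}2^{k}}\equiv\frac12\sum_{k=1}^{p-1}\frac{2^{k}H_k}{k^{2}}-\frac12\sum_{k=1}^{p-1}\frac{2^{k}}{k^{3}}\pmod p
\]
and
\[
\sum_{k=1}^{p-1}\frac{H_k^{2}}{k\,2^{k}}\equiv\sum_{k=1}^{p-1}\frac{2^{k}H_k}{k^{2}}-\frac12\sum_{k=1}^{p-1}\frac{2^{k}H_k^{2}}{k}-\frac12\sum_{k=1}^{p-1}\frac{2^{k}}{k^{3}}\pmod p,
\]
whence \eqref{con4} and \eqref{con5.1} drop out of \eqref{con3}, \eqref{con5} and the known value of $\sum_{k=1}^{p-1}2^{k}/k^{3}\pmod p$. (The same reflection applied to $\sum_{k=1}^{p-1}H_k/(k2^{k})$, now modulo $p^{2}$ and with $2^{-(p-k)}\equiv 2^{k-1}(1-pq_p(2))\pmod{p^{2}}$, $H_{p-k}\equiv H_{k-1}+pH_{k-1,2}\pmod{p^{2}}$, recovers Sun's value $\tfrac{7}{24}pB_{p-3}$ from \eqref{con2}, \eqref{con3} and \eqref{con6}.)

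The one genuinely delicate step is \eqref{con2} at full modulo-$p^{2}$ precision: one must carry Glaisher's second-order information for the power sums, the $p$-parts of $\sum 2^{k}/k$ and $\sum 2^{k}/k^{2}$, and the mod-$p$ value of $\sum 2^{k}H_k/k$ fed back into the overflow --- and it is exactly this feedback, together with the square of $\sum 2^{k}/k$ and the $l=0$ boundary term, that generates the coefficients $\tfrac23$ of $pq_p(2)^{3}$ and $\tfrac1{12}$ of $pB_{p-3}$. Keeping these rationals correct through the cancellations is where the bookkeeping must be done carefully; the other five congruences, being modulo $p$ only, are comparatively routine once the identities above are assembled.
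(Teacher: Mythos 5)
Your treatment of \eqref{con2} itself is sound: the convolution identity obtained by squaring $\sum_{k=1}^{p-1}x^k/k$ and splitting off the overflow $\sum_{l=0}^{p-2}\frac{2(H_{p-1}-H_l)}{p+l}2^{p+l}$ is correct, the mod-$p$ reading does give $-q_p(2)^2$ (Corollary \ref{c1.3}), and I checked that feeding the mod-$p$ value of \eqref{con3} back into the overflow reproduces $-q_p(2)^2+\frac23pq_p(2)^3+\frac{p}{12}B_{p-3}$ exactly; likewise your reflection formulas for \eqref{con4} and \eqref{con5.1} are correct (and coincide with the paper's step (\ref{con42}) and its analogue). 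The genuine gap is in the step you leave sketched: the mod-$p$ determination of $X_1=\sum 2^kH_k/k^2$, $X_2=\sum 2^kH_k^2/k$, $X_3=\sum 2^kH_{k,2}/k$, on which both \eqref{con3}--\eqref{con6} and (through the feedback) the $\frac{p}{12}B_{p-3}$ term of \eqref{con2} depend.

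The convolution relations you name are rank-deficient. Carrying them out, the product $\bigl(\sum x^k/k\bigr)\bigl(\sum x^k/k^2\bigr)$ at $x=2$ yields only the combination $2X_1+X_3\equiv -AB-3\sum 2^k/k^3$ (with $A=\sum 2^k/k$, $B=\sum 2^k/k^2$), and the cube yields only $2X_1+X_2+3X_3\equiv\frac13A^3-2\sum 2^k/k^3-\frac43B_{p-3}$: two equations in three unknowns (both consistent with the stated theorem, as one checks). Your two proposed sources of a third equation do not work. First, the reflection $k\mapsto p-k$ sends $2^k$ to $2^{1-k}$, so it does not bring in Sun's untwisted sums at all; it merely transports the $x=2$ relations to $x=1/2$, and since the $2^{-k}$-twisted sums are themselves linear in $X_1,X_2,X_3$ via that same reflection, the reflected relations reproduce exactly the same two combinations (I verified this for the reflected cube: it again gives $2X_1+X_2+3X_3$). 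Second, the product $\bigl(\sum x^k/k\bigr)\bigl(\sum x^kH_k/k\bigr)$ at $x=2$ has overflow congruent mod $p$ to $-2\sum_{k=1}^{p-1}\frac{2^kH_k}{k}\sum_{s=1}^{k}\frac{1}{s2^s}$, a new mixed double sum that does not reduce to your basis, so this identity adds an unknown rather than closing the system. What is missing is an input that evaluates one of $X_1,X_2,X_3$ (equivalently $X_1-X_3$) individually; in the paper this is exactly the role of Lemmas \ref{l3.1}--\ref{l3.2} (the closed form of $\sum_{k=1}^{p-1}\frac{(-2)^k}{k}\binom{p}{k}$ via Lehmer's congruence \eqref{con14} for $H_{(p-1)/2}$), which pins down $\sum 2^kH_{k-1}/k^2\equiv\frac54B_{p-3}$ and hence \eqref{con3}, and of the induction identity of Lemma \ref{l4.2} for \eqref{con5}, with \eqref{con6} then following. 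Without a substitute for that ingredient your scheme cannot separate the three sums, and consequently \eqref{con2} modulo $p^2$ is not established either.
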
  
As an application, we obtain a result obtained quite recently 
by Z. W. Sun and L. L. Zhao in \cite{sz}.
% Corollary 1.2
\begin{corollary}\label{c1.2}
{\rm (\cite[Theorem 1.1]{sz})}  Let $p>5$ be a prime. Then
   \begin{equation}\label{con7}
\sum_{k=1}^{p-1}\frac{H_k}{k\cdot 2^k}
\equiv\frac{7}{24}pB_{p-3}\pmod{p^2}
   \end{equation}
and 
 \begin{equation}\label{con7.1}
\sum_{k=1}^{p-1}\frac{H_{k,2}}{k\cdot 2^k}
\equiv -\frac{3}{8}B_{p-3}\pmod{p}.
   \end{equation}
\end{corollary}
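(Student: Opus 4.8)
The plan is to obtain both congruences of Corollary~\ref{c1.2} from Theorem~\ref{t1.1} by applying the involution $k\mapsto p-k$ of $\{1,\dots,p-1\}$, which turns the $2^{-k}$-sums on the left of \eqref{con7} and \eqref{con7.1} into the $2^{+k}$-sums already evaluated in Theorem~\ref{t1.1}. Besides Theorem~\ref{t1.1}, the only extra inputs are Wolstenholme's congruences $H_{p-1}\equiv 0\pmod{p^2}$ and $H_{p-1,2}\equiv 0\pmod p$, and the classical (Glaisher-type) congruences $\sum_{k=1}^{p-1}2^k/k^2\equiv -q_p(2)^2+\frac23 pq_p(2)^3+\frac76 pB_{p-3}\pmod{p^2}$ and $\sum_{k=1}^{p-1}2^k/k^3\equiv -\frac13 q_p(2)^3-\frac7{24}B_{p-3}\pmod p$. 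The elementary facts I would record first, for $1\le k\le p-1$, are $\frac1{p-k}\equiv -\frac1k-\frac p{k^2}\pmod{p^2}$, $\frac1{2^{p-k}}=\frac{2^{k-1}}{2^{p-1}}\equiv 2^{k-1}(1-pq_p(2))\pmod{p^2}$, and, by writing $H_{p-k}=H_{p-1}-\sum_{j=1}^{k-1}\frac1{p-j}$ and its order-$2$ analogue, $H_{p-k}\equiv H_{k-1}+pH_{k-1,2}\pmod{p^2}$ together with $H_{p-k,2}\equiv -H_{k-1,2}\pmod p$.

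I would settle \eqref{con7.1} first, since it only needs the mod-$p$ form of the above. Substituting $k\mapsto p-k$, using $\frac1{2^{p-k}}\equiv 2^{k-1}$, $\frac1{p-k}\equiv -\frac1k$ and $H_{p-k,2}\equiv -H_{k-1,2}\pmod p$, and then $H_{k-1,2}=H_{k,2}-\frac1{k^2}$, gives $\sum_{k=1}^{p-1}\frac{H_{k,2}}{k\,2^k}\equiv\frac12\bigl(\sum_{k=1}^{p-1}\frac{2^kH_{k,2}}{k}-\sum_{k=1}^{p-1}\frac{2^k}{k^3}\bigr)\pmod p$. Feeding in \eqref{con6} and the classical value of $\sum 2^k/k^3$ makes the $q_p(2)^3$-terms cancel and leaves $-\frac38 B_{p-3}$, which is \eqref{con7.1}.

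For \eqref{con7} I would run the same substitution modulo $p^2$. Plugging the four expansions into $\sum_{k=1}^{p-1}H_k/(k\,2^k)$, discarding $O(p^2)$-terms, noting that the term carrying $q_p(2)$ explicitly drops out because $\sum_{k=1}^{p-1}\frac{2^kH_k}{k}-\sum_{k=1}^{p-1}\frac{2^k}{k^2}\equiv 0\pmod p$ (by \eqref{con2} and the classical $\sum 2^k/k^2\equiv -q_p(2)^2\pmod p$), and then replacing $H_{k-1}=H_k-\frac1k$ and $H_{k-1,2}=H_{k,2}-\frac1{k^2}$, one is led to
\[
\sum_{k=1}^{p-1}\frac{H_k}{k\,2^k}\equiv -\frac12\sum_{k=1}^{p-1}\frac{2^kH_k}{k}+\frac12\sum_{k=1}^{p-1}\frac{2^k}{k^2}-\frac p2\sum_{k=1}^{p-1}\frac{2^kH_k}{k^2}-\frac p2\sum_{k=1}^{p-1}\frac{2^kH_{k,2}}{k}+p\sum_{k=1}^{p-1}\frac{2^k}{k^3}\pmod{p^2}.
\]
Substituting \eqref{con2} (mod $p^2$) and the classical mod-$p^2$ value of $\sum 2^k/k^2$ into the first two sums, and \eqref{con3}, \eqref{con6} and the classical value of $\sum 2^k/k^3$ into the remaining three, the $q_p(2)^2$- and $q_p(2)^3$-parts must cancel identically (as they have to, since the left side is $\equiv 0\pmod p$), and the surviving $B_{p-3}$-terms collapse to $\frac7{24}pB_{p-3}$, proving \eqref{con7}.

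The main obstacle is purely one of bookkeeping: one must keep track of which auxiliary sum is needed to precision $p$ and which to precision $p^2$ in the displayed identity, and one must carry the exact numerical coefficients in the two classical congruences for $\sum 2^k/k^2$ and $\sum 2^k/k^3$; a single misplaced sign or Bernoulli coefficient there would corrupt the final answer, though every ingredient can be checked numerically against a small prime such as $p=7$.
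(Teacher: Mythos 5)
Your proposal is correct, and the arithmetic checks out: in your displayed identity the combination $-\tfrac12$(\ref{con2})$+\tfrac12$(\ref{con18}) gives $\tfrac{13}{24}pB_{p-3}$ with the $q_p(2)^2$- and $pq_p(2)^3$-parts cancelling, and the three mod-$p$ inputs (\ref{con3}), (\ref{con6}), (\ref{con21}) contribute $-\tfrac14 pB_{p-3}$ with the $q_p(2)^3$-parts cancelling, yielding $\tfrac{7}{24}pB_{p-3}$; likewise $\tfrac12\bigl((\ref{con6})-(\ref{con21})\bigr)=-\tfrac38 B_{p-3}$. For (\ref{con7}) your route is the same underlying reflection idea as the paper's but organized differently: the paper first writes $\sum_k H_k/(k2^k)$ as the double sum $\sum_{1\le i\le k\le p-1}1/(ik2^k)$, reflects both indices, and then evaluates the three resulting double sums in Lemma~\ref{l5.1}, which in turn leans on the binomial-sum congruence (\ref{con37}) of Lemma~\ref{l3.2}; you instead reflect the single sum directly, using $1/(p-k)\equiv-1/k-p/k^2$, $2^{-(p-k)}\equiv 2^{k-1}(1-pq_p(2))$ and $H_{p-k}\equiv H_{k-1}+pH_{k-1,2}\pmod{p^2}$, which reduces everything to the single sums (\ref{con2}), (\ref{con3}), (\ref{con6}), (\ref{con18}), (\ref{con21}) and bypasses Lemmas~\ref{l3.2} and~\ref{l5.1} altogether (the needed vanishing $\sum_k 2^kH_{k-1}/k\equiv0\pmod p$ is the paper's (\ref{con54}) in weaker form). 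For (\ref{con7.1}) your argument is genuinely different and shorter: the paper deduces it from (\ref{con9}), the integral identity (\ref{con36}) with $n=p-1$, $x=2$, together with the already-proved (\ref{con7}) and the $H_k^2$-congruence (\ref{con5.1}), whereas your mod-$p$ reflection needs only (\ref{con6}) and (\ref{con21}) and is independent of (\ref{con7}) and (\ref{con5.1}). What the paper's organization buys is a set of reusable double-sum evaluations (Lemma~\ref{l5.1}); what yours buys is a leaner derivation of the corollary from Theorem~\ref{t1.1} plus Lemma~\ref{l2.3}, at the modest cost of carrying the mod-$p^2$ expansion of $H_{p-k}$ explicitly.
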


% Remark 1.3
 \begin{remark}
{\rm The congruence (\ref{con7}) is 
conjectured by Z. W. Sun in \cite[Conjecture 1.1]{s} 
and quite recently proved by Z. W. Sun and L. L. Zhao in 
\cite{sz}. We point out that Lemma 2.3 
from \cite{sz} presents  the  main auxiliary result in the
proof of (\ref{con7}) and its proof is based on a 
polynomial congruence recently obtained by L. L. Zhao 
and Z. W. Sun in \cite[Theorem 1.2]{zs}.  
Moreover, in this proof the authors also use 
the congruence $\sum_{k=1}^{p-1}H_k/k^2 \equiv B_{p-3} \,(\bmod{\,p})$
obtained by Sun and Tauraso in \cite[the congruence (5.4)]{st}. 
 
Notice also that the first congruence in \cite[Conjecture 1.1]{s} 
is also proved by the author of this note in 
\cite[Theorem 1.1 (3)]{m}.}
 \end{remark}

Reducing the modulus in (\ref{con2}) 
we have 

% Corollary 1.3
\begin{corollary}\label{c1.3} Let $p>5$ be a prime. Then
  \begin{equation}\label{con8}
\sum_{k=1}^{p-1}\frac{2^kH_k}{k}\equiv -q_p(2)^2
\pmod{p}.
   \end{equation}
\end{corollary}
This paper is organized as follows. 
In the next section, using numerous classical and 
recent combinatorial congruences, 
we prove the congruence (\ref{con2}). Applying (\ref{con2}) and some
auxiliary results, in Section 3 we establish  
the congruences (\ref{con3}) and (\ref{con4}). Section 4 is devoted 
to the proof of (\ref{con5}), (\ref{con5.1}) 
and (\ref{con6}) based on the previous
congruences and an identity for harmonic numbers.
As an application, in Section 5 
we prove Corollary~\ref{c1.2}  which contains
two  congruences  recently obtained by Z. W. Sun
and L. L. Zhao in \cite{sz}.

\section{Proof of the congruence (\ref{con2})}

% Lemma 2.1
\begin{lemma}\label{l2.1}
 If $p\ge 3$ is a prime, then
  \begin{equation}\label{con9}
{p-1\choose k}\equiv (-1)^k-(-1)^{k}pH_{k}+(-1)^k\frac{p^2}{2}
(H_{k}^2-H_{k,2})\pmod{p^3}
   \end{equation}
for each $k=1,2,\ldots,p-1$. In particular, we have 
  \begin{equation}\label{con10}
{p-1\choose k}\equiv (-1)^k-(-1)^{k}pH_{k}\pmod{p^2}.
   \end{equation}
 \end{lemma}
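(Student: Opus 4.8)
The plan is to reduce everything to expanding a single product. For $1\le k\le p-1$ write
$$\binom{p-1}{k}=\prod_{j=1}^{k}\frac{p-j}{j}=(-1)^k\prod_{j=1}^{k}\Big(1-\frac{p}{j}\Big).$$
Since every integer $j$ with $1\le j\le k\le p-1$ is coprime to $p$, the reciprocals $1/j$, and hence $H_k$ and $H_{k,2}$, are well-defined modulo $p^3$, so it suffices to expand the right-hand product and keep the terms up to $p^2$.

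Carrying that out, let $e_0=1,e_1,e_2,\dots,e_k$ denote the elementary symmetric functions of the numbers $1/1,1/2,\dots,1/k$. Then
$$\prod_{j=1}^{k}\Big(1-\frac{p}{j}\Big)=\sum_{i=0}^{k}(-1)^i e_i\,p^i=1-e_1p+e_2p^2-\sum_{i=3}^{k}(-1)^i e_i p^i.$$
Each $e_i$ is a finite sum of products of reciprocals of integers less than $p$, hence a $p$-adic integer; consequently the tail $\sum_{i\ge 3}(-1)^ie_ip^i$ is divisible by $p^3$ and drops out modulo $p^3$. This already gives $\binom{p-1}{k}\equiv(-1)^k\big(1-e_1p+e_2p^2\big)\pmod{p^3}$.

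It remains only to recognize $e_1$ and $e_2$. By definition $e_1=\sum_{j=1}^{k}1/j=H_k$, while
$$e_2=\sum_{1\le i<j\le k}\frac{1}{ij}=\frac12\Big(\Big(\sum_{j=1}^{k}\frac1j\Big)^{2}-\sum_{j=1}^{k}\frac1{j^{2}}\Big)=\frac12\big(H_k^{2}-H_{k,2}\big),$$
which is just Newton's identity $e_2=\tfrac12(p_1^2-p_2)$. Substituting yields \eqref{con9}, and discarding the $p^2$-term after reducing modulo $p^2$ gives \eqref{con10}. I do not expect any genuine obstacle here; the only point needing a word of care is the observation that the denominators occurring in the $e_i$ have $p$-adic valuation zero, which is simultaneously what makes the congruences meaningful and what guarantees that the higher-order tail is truly $\equiv 0\pmod{p^3}$.
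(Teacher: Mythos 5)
Your proof is correct and follows essentially the same route as the paper: writing $(-1)^k\binom{p-1}{k}=\prod_{j=1}^k\bigl(1-\tfrac{p}{j}\bigr)$, expanding, discarding the terms of order $p^3$ and higher, and identifying the first and second elementary symmetric functions with $H_k$ and $\tfrac12(H_k^2-H_{k,2})$. Your explicit remark about the $p$-adic integrality of the symmetric functions just makes precise what the paper's computation uses implicitly.
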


\begin{proof} 
For a fixed $1\le k\le p-1$ we have
    \begin{equation*}\begin{split} 
(-1)^k{p-1\choose k}
&=\prod_{i=1}^k\left(1-\frac{p}{i}\right)
\equiv 1-\sum_{i=1}^k\frac{p}{i}+\sum_{1\le i<j\le k}\frac{p^2}{ij}
\quad\pmod{p^3}\\
&= 1-pH_k+\frac{p^2}{2}\left(\Big(\sum_{i=1}^k\frac{1}{i}\Big)^2-
\sum_{i=1}^k\frac{1}{i^2}\right)\\
&= 1-pH_k+\frac{p^2}{2}(H_k^2-H_{k,2})\quad\qquad\pmod{p^3},
  \end{split}\end{equation*}
whence we have (\ref{con9}). Notice that reducing the modulus 
into (\ref{con9}) yields (\ref{con10}).
\end{proof}

 % Lemma 2.2
\begin{lemma}\label{l2.2}
 If $p> 3$ is a prime, then
   \begin{equation}\label{con11}
H_{p-1}:=\sum_{k=1}^{p-1}\frac{1}{k}
\equiv -\frac{p^2}{3}B_{p-3}\pmod{p^3},
   \end{equation}
\begin{equation}\label{con12}
H_{p-1,2}:=\sum_{k=1}^{p-1}\frac{1}{k^2}
\equiv \frac{2p}{3}B_{p-3}\pmod{p^2},
   \end{equation}
\begin{equation}\label{con13}
H_{p-1,3}:=\sum_{k=1}^{p-1}\frac{1}{k^3}
\equiv 0\pmod{p^2},
   \end{equation}
 \begin{equation}\label{con14}
H_{(p-1)/2}:=\sum_{k=1}^{(p-1)/2}\frac{1}{k}\equiv 
-2q_2(p)+pq_2(p)^2-\frac{2p^2}{3}q_2(p)^3-\frac{7p^2}{12}B_{p-3}  \pmod{p^3},
   \end{equation}
   \begin{equation}\label{con15}
H_{(p-1)/2,2}:=\sum_{k=1}^{(p-1)/2}\frac{1}{k^2}\equiv \frac{7p}{3}B_{p-3}
\pmod{p^2}
   \end{equation}
and
 \begin{equation}\label{con16}
H_{(p-1)/2,3}:=\sum_{k=1}^{(p-1)/2}\frac{1}{k^3}\equiv  -2B_{p-3} \pmod{p}.
   \end{equation}
\end{lemma}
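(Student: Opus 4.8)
The six congruences are the classical Wolstenholme--Glaisher evaluations of the power sums $H_{p-1,m}$ together with their half--interval refinements in terms of the Fermat quotient $q_2(p)=(2^{p-1}-1)/p$, and the plan is to derive all of them from two standard devices. The first turns a reciprocal power sum into an ordinary one: since $\gcd(k,p)=1$, Euler's theorem gives $k^{-m}\equiv k^{\varphi(p^r)-m}\pmod{p^r}$, so each of $H_{p-1,m}$ and $H_{(p-1)/2,m}$ becomes a genuine power sum, which Faulhaber's formula expresses as $\frac{1}{t+1}\sum_{j=0}^{t}\binom{t+1}{j}B_j p^{t+1-j}$ with $t=\varphi(p^r)-m$; modulo the relevant power of $p$ only one or two terms survive, and Kummer's congruence (with von Staudt--Clausen ensuring $p$--integrality) rewrites the resulting Bernoulli number, whose index is $\equiv p-3\pmod{p-1}$, as a rational multiple of $B_{p-3}$. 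The second device is the involution $k\mapsto p-k$: expanding $(p-k)^{-m}=(-1)^m k^{-m}(1-p/k)^{-m}$ as a $p$--adic series turns the $H_{p-1,m}$ into each other, and splitting $\{1,\dots,p-1\}$ into even and odd residues yields the exact identity $H_{(p-1)/2,m}=2^{m-1}\bigl(H_{p-1,m}+\sum_{k=1}^{p-1}(-1)^k/k^m\bigr)$, reducing each half--interval sum to a full sum plus an alternating sum, the latter being controlled by $2^{p-1}=1+pq_2(p)$.

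For the full--interval congruences I would argue as follows. Pairing $k$ with $p-k$ gives $1/k^3+1/(p-k)^3\equiv -3p/k^4\pmod{p^2}$, hence $2H_{p-1,3}\equiv -3p\,H_{p-1,4}\pmod{p^2}$, and $H_{p-1,4}\equiv\sum_{k=1}^{p-1}k^{p-5}\equiv 0\pmod p$ for $p>5$ because $(p-1)\nmid(p-5)$; this gives (\ref{con13}). For (\ref{con12}), $H_{p-1,2}\equiv\sum_{k=1}^{p-1}k^{p(p-1)-2}\pmod{p^2}$, and Faulhaber's formula leaves modulo $p^2$ only the term $p\,B_{p(p-1)-2}$; since $p(p-1)-2\equiv p-3\pmod{p-1}$, Kummer's congruence gives $B_{p(p-1)-2}\equiv\tfrac23 B_{p-3}\pmod p$, whence $H_{p-1,2}\equiv\tfrac23 p\,B_{p-3}\pmod{p^2}$. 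Finally the same pairing gives $2H_{p-1}\equiv -p\,H_{p-1,2}-p^2H_{p-1,3}\pmod{p^3}$, and substituting (\ref{con12}) and (\ref{con13}) yields (\ref{con11}).

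For the half--interval congruences I would use the even/odd identity above. With $m=3$ it reduces (\ref{con16}) to $\sum_{k=1}^{p-1}(-1)^{k-1}/k^3\equiv\tfrac12 B_{p-3}\pmod p$, and with $m=2$ it reduces (\ref{con15}) to an evaluation of $\sum_{k=1}^{p-1}(-1)^k/k^2$ modulo $p^2$; both of these alternating congruences are classical and follow from the same Faulhaber/Kummer reduction applied to $\sum_{k=1}^{p-1}(-1)^k k^t$ (versions are available in \cite{st} and \cite{s}), so combining them with (\ref{con12}) gives (\ref{con15}) and (\ref{con16}). For the deepest statement (\ref{con14}) I would write $H_{(p-1)/2}=H_{p-1}-\sum_{k=1}^{p-1}(-1)^{k-1}/k$ and evaluate the alternating sum from the exact identity $2q_2(p)=\frac{2^p-2}{p}=\sum_{k=1}^{p-1}\frac1k\binom{p-1}{k-1}$ by inserting the mod $p^3$ expansion of $\binom{p-1}{k-1}$ supplied by Lemma~\ref{l2.1}; the correction terms are the alternating harmonic sums $\sum_{k=1}^{p-1}(-1)^{k-1}H_{k-1}/k$ (needed modulo $p^2$) and $\sum_{k=1}^{p-1}(-1)^{k-1}(H_{k-1}^2-H_{k-1,2})/k$ (needed modulo $p$), both again reducible to $B_{p-3}$ by the first device. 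Combining with (\ref{con11}) then gives (\ref{con14}). Since all six identities are already in the literature, one may alternatively just cite them; the above indicates a self-contained route.

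The Faulhaber summations and Kummer congruences are routine. The point where I expect to have to be most careful is the coefficient bookkeeping in (\ref{con14}): one must expand $2^{p-1}=1+pq_2(p)$ to third order, keep the constants $\tfrac23$ and $\tfrac{7}{12}$ in front of $q_2(p)^3$ and $B_{p-3}$ correct, and invoke the auxiliary alternating harmonic-sum congruences with exactly the right rational coefficients --- a single slip there propagates through (\ref{con3})--(\ref{con6}) downstream.
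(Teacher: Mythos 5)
The paper does not actually prove this lemma by computation: all six congruences are simply quoted from the literature ((\ref{con11}) from Lehmer \cite{l} and \cite[Theorem 5.1(a)]{s1}, (\ref{con12}), (\ref{con14})--(\ref{con16}) from \cite[Corollary 5.1, Theorem 5.2, Corollary 5.2]{s1}, and (\ref{con13}) from \cite[Theorem 3(b)]{b}), so your fallback option of citing is exactly what the author does. Your self-contained sketch is sound for (\ref{con11})--(\ref{con13}): the pairing $k\mapsto p-k$ together with the Euler/Faulhaber/Kummer reduction gives precisely these, and you are right to flag $p>5$ in the (\ref{con13}) step (the congruence in fact fails at $p=5$, so the lemma's hypothesis $p>3$ is slightly too generous there, though this is harmless for the main theorem). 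For (\ref{con15})--(\ref{con16}) your even/odd identity and the reduction to alternating sums is also a legitimate classical route, with one caveat: the Faulhaber expression you wrote, with powers of $p$, is valid only for the full interval; for the half-interval sums the upper limit is $(p-1)/2$, so one needs Bernoulli polynomial values at $(p\pm 1)/2$ and $B_n(1/2)=(2^{1-n}-1)B_n$ --- which is exactly how the powers of $q_2(p)$ arise in the standard proofs in \cite{s1}.

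The genuine gap is in your treatment of (\ref{con14}), the one congruence that carries the Fermat-quotient terms. Your plan requires $\sum_{k=1}^{p-1}(-1)^{k-1}H_{k-1}/k \pmod{p^2}$ and $\sum_{k=1}^{p-1}(-1)^{k-1}(H_{k-1}^2-H_{k-1,2})/k \pmod p$, and you assert these are ``again reducible to $B_{p-3}$ by the first device.'' That is wrong on both counts: the Faulhaber/Kummer device applies to pure power sums, not to harmonic-weighted sums, and these sums are not multiples of $B_{p-3}$ --- they are exactly the source of the $q_2(p)^2$ and $q_2(p)^3$ terms. Indeed, combining your identity $2q_2(p)=\sum_{k=1}^{p-1}\binom{p-1}{k-1}/k$ with $\sum_{k=1}^{p-1}(-1)^{k-1}/k=H_{p-1}-H_{(p-1)/2}$ and (\ref{con14}) itself shows $\sum_{k=1}^{p-1}(-1)^{k-1}H_{k-1}/k\equiv -q_2(p)^2\pmod p$ (check $p=5$: the sum is $-11/24\equiv 1\equiv -q_2(5)^2$), and determining that sum modulo $p^2$ is, through the very identity you invoke, essentially equivalent to (\ref{con14}). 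So as written the argument for (\ref{con14}) is circular, or at best defers the entire difficulty to unproved auxiliary congruences whose claimed shape and claimed method of proof are both incorrect. To close it you must either establish those alternating Euler-sum congruences independently (versions exist, e.g.\ in \cite{s2}), or prove (\ref{con14}) directly by the Bernoulli-polynomial route of \cite[Theorem 5.2]{s1}, or simply cite it as the paper does.
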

  \begin{proof}
The congruence (\ref{con11}) 
is proved in \cite{l}; see also \cite[Theorem 5.1(a)]{s1},
while  (\ref{con12}) is  a
   particular case of \cite[Corollary 5.1]{s1}
The well known congruence (\ref{con13}) 
is a particular case of \cite[Theorem 3 (b)]{b}
and  (\ref{con14}) is in fact the congruence 
(c) in \cite[Theorem 5.2]{s1}. Further, the congruences 
(\ref{con15}) and (\ref{con16}) are 
the congruences (a) with $k=2$ and  
(b) with $k=3$ in \cite[Corollary 5.2]{s1}, respectively.
  \end{proof}

We will also need the following  six 
congruences recently established 
by  Z. H. Sun \cite{s2} and Dilcher and Skula \cite{ds}.
  % Lemma 2.3
\begin{lemma}\label{l2.3}
 Let $p>3$ be a prime. Then
  \begin{equation}\label{con17}
\sum_{k=1}^{p-1}\frac{2^k}{k}\equiv -2q_p(2)-\frac{7p^2}{12}
B_{p-3}\pmod{p^3},
   \end{equation}
     \begin{equation}\label{con18} 
\sum_{k=1}^{p-1}\frac{2^k}{k^2}\equiv -q_p(2)^2+p\left(\frac{2}{3}q_p(2)^3
+\frac{7}{6}B_{p-3}\right)\pmod{p^2},
          \end{equation}
    \begin{equation}\label{con19}
\sum_{k=1}^{p-1}\frac{1}{k\cdot 2^k}\equiv q_p(2)-\frac{p}{2}q_p(2)^2
\pmod{p^2},
   \end{equation}
    \begin{equation}\label{con20} 
\sum_{k=1}^{p-1}\frac{1}{k^2\cdot 2^k}\equiv -\frac{1}{2}q_p(2)^2
\pmod{p},
   \end{equation}
     \begin{equation}\label{con21} 
\sum_{k=1}^{p-1}\frac{2^k}{k^3}\equiv -\frac{1}{3}q_p(2)^3-
\frac{7}{24}B_{p-3}\pmod{p}
   \end{equation}
and
  \begin{equation}\label{con22} 
\sum_{k=1}^{p-1}\frac{1}{k^3\cdot 2^k}\equiv \frac{1}{6}q_p(2)^3+
\frac{7}{48}B_{p-3}\pmod{p}.
   \end{equation}
 \end{lemma}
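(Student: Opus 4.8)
The plan is to treat the six congruences uniformly as \emph{Fermat-quotient} congruences and to reduce each sum $\sum_{k=1}^{p-1}2^{\pm k}/k^{s}$, by elementary binomial identities together with the reflection $k\mapsto p-k$, to the Glaisher-type evaluations of $H_{p-1,m}$ and $H_{(p-1)/2,m}$ recorded in Lemma~\ref{l2.2}, plus a short list of alternating Euler sums $\sum_{k=1}^{p-1}(-1)^{k-1}H_{k}^{r}/k^{s}$ and $\sum_{k=1}^{p-1}(-1)^{k-1}H_{k,2}/k$ of weight $r+s\le3$, whose residues modulo the relevant power of $p$ are polynomials in $q_{p}(2)$ and $B_{p-3}$. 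This is essentially the scheme carried out in \cite{s2} and \cite{ds}; I outline it below.

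\emph{The positive-power sums \eqref{con17}, \eqref{con18}, \eqref{con21}.} First I would invoke the telescoping identity
\[
\sum_{k=1}^{n}\frac{2^{k}-1}{k}=\sum_{k=1}^{n}\frac{1}{k}\binom{n}{k},
\]
which follows from $2^{k}-1=\sum_{j\ge1}\binom{k}{j}$, the relation $\binom{k}{j}/k=\binom{k-1}{j-1}/j$, and the hockey-stick identity. Setting $n=p-1$ and substituting the mod $p^{3}$ expansion of $\binom{p-1}{k}$ from Lemma~\ref{l2.1} (the two $H_{p-1}$ terms cancelling, since $\sum_{k=1}^{p-1}(-1)^{k}/k=H_{(p-1)/2}-H_{p-1}$) yields
\[
\sum_{k=1}^{p-1}\frac{2^{k}}{k}\equiv H_{(p-1)/2}-p\sum_{k=1}^{p-1}\frac{(-1)^{k}H_{k}}{k}+\frac{p^{2}}{2}\sum_{k=1}^{p-1}\frac{(-1)^{k}(H_{k}^{2}-H_{k,2})}{k}\pmod{p^{3}}.
\]
Inserting the mod $p^{3}$ value of $H_{(p-1)/2}$ from \eqref{con14} and the (known) mod $p^{2}$, resp.\ mod $p$, values of the two alternating sums gives \eqref{con17}. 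For \eqref{con18} and \eqref{con21} one proceeds in the same way with $\sum_{k=1}^{p-1}(2^{k}-1)/k^{2}$ and $\sum_{k=1}^{p-1}(2^{k}-1)/k^{3}$ --- or, more efficiently, with the classical Glaisher-type evaluation of $\sum_{k=1}^{p-1}2^{k}k^{m}$ modulo powers of $p$ used in \cite{s2} --- again reducing to Lemma~\ref{l2.2} and Euler sums of weight $\le3$.

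\emph{The negative-power sums \eqref{con19}, \eqref{con20}, \eqref{con22}.} Next I would apply the substitution $k\mapsto p-k$. Using $2^{\,p-k}=2^{p}2^{-k}=(2+2pq_{p}(2))2^{-k}$ and $\dfrac{1}{(p-k)^{s}}=(-1)^{s}\sum_{j\ge0}\binom{s-1+j}{j}\dfrac{p^{j}}{k^{s+j}}$, the identity $\sum_{k=1}^{p-1}2^{k}/k^{s}=\sum_{k=1}^{p-1}2^{\,p-k}/(p-k)^{s}$ becomes, for $s=1,2,3$, a triangular system of congruences relating $\sum 2^{k}/k^{s}$ to the sums $\sum 1/(k^{t}2^{k})$ with $t\ge s$. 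Solving it from $s=3$ downwards, using the already-proved \eqref{con17}, \eqref{con18}, \eqref{con21} and the values of $H_{p-1,m}$ from Lemma~\ref{l2.2}, produces \eqref{con22}, then \eqref{con20}, then \eqref{con19}.

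\emph{The main obstacle.} The substance is concentrated in two places. First, one must pin down the mod $p$ and mod $p^{2}$ values of the auxiliary alternating Euler sums of weight $2$ and $3$ --- for instance $\sum_{k=1}^{p-1}(-1)^{k-1}H_{k}/k\equiv-q_{p}(2)^{2}\pmod p$ --- in terms of $q_{p}(2)$ and $B_{p-3}$; these are genuine multiple-harmonic-sum congruences, not formal manipulations. Second, one must carry the expansions of $\binom{p-1}{k}$, of $2^{\pm p}$, and of $H_{(p-1)/2}$ to full mod $p^{3}$ precision and verify that all the contributions collapse onto the single Bernoulli number $B_{p-3}$ with the stated coefficients $-\tfrac{7}{12},\ \tfrac{7}{6},\ -\tfrac12,\ -\tfrac13,\ -\tfrac{7}{24},\ \tfrac{7}{48}$, leaving no room for an off-by-$p$ slip. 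Since all of this is already established in \cite{s2} and \cite{ds}, for the present note it suffices to quote the six congruences from there.
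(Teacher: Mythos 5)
Your final fallback --- quoting the congruences from \cite{s2} and \cite{ds} --- is essentially the paper's own proof, so the proposal is acceptable, but with two caveats worth noting. Only \eqref{con17}--\eqref{con20} can be quoted verbatim (they are (i)--(iv) of Theorem 4.1 in \cite{s2}); the weight-three statements are not in the references in the stated form: \cite{ds} gives $\sum_{k=1}^{p-1}2^k/k^3\equiv-\tfrac13q_p(2)^3-\tfrac{7}{48}H_{(p-1)/2,3}\pmod p$, so the paper must still insert \eqref{con16} of Lemma~\ref{l2.2} to get \eqref{con21}, and then deduces \eqref{con22} from \eqref{con21} by the reflection $k\mapsto p-k$ together with $2^p\equiv 2\pmod p$ --- exactly the mechanism you describe, so your scheme does cover this; moreover your triangular-system reflection argument deriving \eqref{con22}, \eqref{con20}, \eqref{con19} from \eqref{con21}, \eqref{con18}, \eqref{con17} (working modulo $p$, $p$, $p^2$ respectively) is correct and gives a slightly more economical alternative to citing \eqref{con19}, \eqref{con20} directly. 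On the other hand, your sketched from-scratch treatment of \eqref{con17}, \eqref{con18}, \eqref{con21} via $\sum_{k\le n}(2^k-1)/k=\sum_{k\le n}\binom nk/k$ and Lemma~\ref{l2.1} is only a reduction: the mod $p^2$ and mod $p$ evaluations of the alternating Euler sums such as $\sum_{k=1}^{p-1}(-1)^kH_k/k$ and $\sum_{k=1}^{p-1}(-1)^k(H_k^2-H_{k,2})/k$ are precisely where the content lies, and you do not establish them, as you yourself acknowledge; as written it is the citation, not the sketch, that carries the proof --- which is fine, since that is also the route taken in the paper.
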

\begin{proof}
The congruences (\ref{con17})--(\ref{con20})
are in fact the congruences (i)--(iv) in \cite[Theorem 4.1]{s2}.
By the congruence (5) in \cite[Theorem 1]{ds},
     $$
  \sum_{k=1}^{p-1}\frac{2^k}{k^3}\equiv -\frac{1}{3}q_p(2)^3-
\frac{7}{48}\sum_{k=1}^{(p-1)/2}\frac{1}{k^3}\pmod{p},
  $$
from which since by  (\ref{con16}),
    $$
\sum_{k=1}^{(p-1)/2}\frac{1}{k^3}\equiv  -2B_{p-3}\pmod{p}
   $$
we immediately obtain (\ref{con21}).
Finally, (\ref{con22}) follows immediately from 
(\ref{con21}) by applying the substitution trick $k\mapsto p-k$ 
and the fact that $2^p\equiv 2 \,(\bmod{\,p})$ by Fermat
little theorem.
    \end{proof}

 % Lemma 2.4
\begin{lemma}\label{l2.4} Let $n$ be an arbitrary positive integer. Then
       \begin{equation}\label{con23}
\sum_{1\le k\le i\le n}\frac{2^k-1}{ki}
=\sum_{j=1}^{n}\frac{1}{j^2}{n\choose j}.
   \end{equation}          
  \end{lemma}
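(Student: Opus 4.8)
The identity in Lemma~\ref{l2.4} relates a double harmonic-type sum to a single sum of binomial coefficients, so the natural strategy is to compute both sides independently and show they agree. The cleanest route is to prove the equivalent identity
\[
\sum_{1\le k\le i\le n}\frac{2^k}{ki}-\sum_{1\le k\le i\le n}\frac{1}{ki}
=\sum_{j=1}^{n}\frac{1}{j^2}\binom{n}{j},
\]
and I would attack the left-hand side by swapping the order of summation: write $\sum_{1\le k\le i\le n}\frac{a_k}{ki}=\sum_{k=1}^{n}\frac{a_k}{k}\sum_{i=k}^{n}\frac1i=\sum_{k=1}^{n}\frac{a_k}{k}(H_n-H_{k-1})$. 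This reduces everything to telescoping sums involving $H_n$, $H_{k}$, and the partial sums $\sum 2^k/k$, but that path then needs a separate identity for the single sum $\sum_{j}\binom{n}{j}/j^2$, which is itself not elementary.

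A more robust approach is \emph{induction on $n$}, and this is what I would actually carry out. For $n=1$ both sides equal $1$. For the inductive step, I would compute the difference of each side between $n$ and $n-1$. The right-hand side increments by
\[
\sum_{j=1}^{n}\frac{1}{j^2}\binom{n}{j}-\sum_{j=1}^{n-1}\frac{1}{j^2}\binom{n-1}{j}
=\frac{1}{n^2}+\sum_{j=1}^{n-1}\frac{1}{j^2}\left(\binom{n}{j}-\binom{n-1}{j}\right)
=\frac{1}{n^2}+\sum_{j=1}^{n-1}\frac{1}{j^2}\binom{n-1}{j-1},
\]
using Pascal's rule, and then $\frac{1}{j^2}\binom{n-1}{j-1}=\frac{1}{jn}\binom{n}{j}$ since $\frac1j\binom{n-1}{j-1}=\frac1n\binom{n}{j}$; so the increment becomes $\frac1n\sum_{j=1}^{n}\frac1j\binom{n}{j}$. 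On the left-hand side, the difference between the $n$-sum and the $(n-1)$-sum is the terms with $i=n$, namely $\sum_{k=1}^{n}\frac{2^k-1}{kn}=\frac1n\sum_{k=1}^{n}\frac{2^k-1}{k}$. Thus the inductive step reduces to the single-variable identity
\[
\sum_{k=1}^{n}\frac{2^k-1}{k}=\sum_{j=1}^{n}\frac{1}{j}\binom{n}{j},
\]
which is classical: it follows from $\binom{n}{j}=\binom{n-1}{j-1}+\binom{n-1}{j}$-type recursion, or directly from $\sum_{j=1}^{n}\frac{x^j}{j}\binom{n}{j}$ evaluated via $\int_0^1\frac{(1+t)^n-1}{t}\,dt$ with the substitution making $2^k-1=\sum_{j=1}^{k}\binom{k}{j}$ and re-summing. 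I would include a one-line proof of this auxiliary identity (e.g.\ both sides satisfy the same recurrence $a_n-a_{n-1}=\frac{2^n-1}{n}$ after the binomial manipulation above, or cite it as standard), and the main identity then follows by induction.

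\textbf{Expected obstacle.} The only real content is the auxiliary identity $\sum_{k=1}^{n}\frac{2^k-1}{k}=\sum_{j=1}^{n}\frac1j\binom nj$; everything else is bookkeeping with Pascal's rule and reindexing. I would prove it by its own induction, noting that $\sum_{j=1}^{n}\frac1j\binom nj-\sum_{j=1}^{n-1}\frac1j\binom{n-1}j=\frac1n+\sum_{j=1}^{n-1}\frac1j\binom{n-1}{j-1}=\frac1n+\frac1n\sum_{j=1}^{n-1}\binom nj\cdot\frac{j}{j}$—wait, more cleanly, $\frac1j\binom{n-1}{j-1}=\frac1n\binom nj$ again gives increment $\frac1n\sum_{j=0}^{n}\binom nj-\frac1n=\frac{2^n-1}{n}$, matching $\frac{2^n-1}{n}$ on the left. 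So in fact both identities unwind with the \emph{same} absorption trick $\frac1j\binom{n-1}{j-1}=\frac1n\binom nj$, and there is no genuine difficulty—just the risk of an index or sign slip in the Pascal expansions, which is where I would be most careful.
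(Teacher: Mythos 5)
Your proof is correct, but it follows a genuinely different route from the paper. The paper gives a direct, non-inductive derivation: it expands $2^k-1=\sum_{j=1}^{k}\binom{k}{j}$ by the binomial theorem inside the double sum, then repeatedly applies the absorption identity $\tfrac{1}{k}\binom{k}{j}=\tfrac{1}{j}\binom{k-1}{j-1}$ together with the hockey-stick summation $\sum_{k=j}^{i}\binom{k-1}{j-1}=\binom{i}{j}$ (used twice, once in each summation variable) to collapse the triple sum directly into $\sum_{j=1}^{n}\tfrac{1}{j^2}\binom{n}{j}$. You instead induct on $n$: peeling off the $i=n$ slice of the left side and using Pascal's rule plus the same absorption identity $\tfrac{1}{j}\binom{n-1}{j-1}=\tfrac{1}{n}\binom{n}{j}$ on the right side, you reduce the step to the one-dimensional identity $\sum_{k=1}^{n}\tfrac{2^k-1}{k}=\sum_{j=1}^{n}\tfrac{1}{j}\binom{n}{j}$, which you then verify by its own induction with the identical absorption trick; all of these computations check out (including the increment $\tfrac{1}{n}+\tfrac{2^n-2}{n}=\tfrac{2^n-1}{n}$). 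The trade-off is that the paper's argument is a single self-contained chain of equalities exhibiting the combinatorial mechanism at once, whereas your inductive scheme is more modular and mechanical: it factors the two-dimensional identity through the classical one-dimensional one, at the cost of running two inductions whose substance is essentially the same absorption/Pascal manipulation the paper performs directly.
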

\begin{proof} 
Using the well known identities 
$\sum_{k=j}^{i}{k-1\choose i-1}={i\choose j}$ with $j\le i$, 
 $\frac{1}{k}{k\choose j}=\frac{1}{j}{k-1\choose j-1}$ with 
$j\le k$, and the fact that ${k\choose j}=0$ when $k<j$, we have 
  \begin{eqnarray*}
\sum_{1\le k\le i\le n}\frac{2^k-1}{ki}
&=&\sum_{1\le k\le i\le n}\frac{(1+1)^k-1}{ki}=
\sum_{1\le k\le i\le n}\frac{1}{i}\sum_{j=1}^{k}\frac{1}{k}{k\choose j}\\
&=&\sum_{1\le k\le i\le n}\frac{1}{i}\sum_{j=1}^{n}
\frac{1}{j}{k-1\choose j-1}=\sum_{j=1}^{n}\frac{1}{j}
\sum_{1\le k\le i\le n}\frac{1}{i}{k-1\choose j-1}\\
&=&\sum_{j=1}^{n}\frac{1}{j}\sum_{j\le k\le i\le n}
\frac{1}{i}{k-1\choose j-1}=\sum_{j=1}^{n}\frac{1}{j}\sum_{i=j}^n\frac{1}{i}
\sum_{k=j}^i{k-1\choose j-1}\\
&=&\sum_{j=1}^{n}\frac{1}{j}\sum_{i=j}^n\frac{1}{i}{i\choose j}
=\sum_{j=1}^{n}\frac{1}{j}\sum_{i=j}^n\frac{1}{j}{i-1\choose j-1}\\
&=&\sum_{j=1}^{n}\frac{1}{j^2}\sum_{i=j}^n{i-1\choose j-1}=
\sum_{j=1}^{n}\frac{1}{j^2}{n\choose j},
  \end{eqnarray*}
as desired.
  \end{proof}
% Lemma 2.5
\begin{lemma}\label{l2.5}
Let $p>3$ be a prime. Then
  \begin{equation}\label{con24}
\sum_{k=1}^{p-1}\frac{1}{k^2}{p-1\choose k}\equiv \frac{3p}{4}B_{p-3}
\pmod{p^2}.
   \end{equation}
\end{lemma}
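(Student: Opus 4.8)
The plan is to expand $\binom{p-1}{k}$ modulo $p^{2}$ by means of Lemma~\ref{l2.1} and to reduce the whole sum to two alternating sums whose values follow from Lemma~\ref{l2.2}. Dividing the congruence (\ref{con10}) by $k^{2}$ (a unit modulo $p^{2}$) and summing over $k=1,\dots,p-1$ gives
\[
\sum_{k=1}^{p-1}\frac{1}{k^{2}}\binom{p-1}{k}\equiv\sum_{k=1}^{p-1}\frac{(-1)^{k}}{k^{2}}-p\sum_{k=1}^{p-1}\frac{(-1)^{k}H_{k}}{k^{2}}\pmod{p^{2}},
\]
so it suffices to evaluate $\sum_{k=1}^{p-1}(-1)^{k}/k^{2}$ modulo $p^{2}$ and $\sum_{k=1}^{p-1}(-1)^{k}H_{k}/k^{2}$ modulo $p$.

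For the first sum, separating the even-indexed terms gives $\sum_{k=1}^{p-1}(-1)^{k}/k^{2}=\tfrac12 H_{(p-1)/2,2}-H_{p-1,2}$, and then (\ref{con12}) and (\ref{con15}) yield $\sum_{k=1}^{p-1}(-1)^{k}/k^{2}\equiv\bigl(\tfrac{7}{6}-\tfrac{2}{3}\bigr)pB_{p-3}=\tfrac{p}{2}B_{p-3}\pmod{p^{2}}$.

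The heart of the matter is the evaluation of $T:=\sum_{k=1}^{p-1}(-1)^{k-1}H_{k}/k^{2}$ modulo $p$, for which I would use the substitution $k\mapsto p-k$ directly in the sum. Since $p$ is odd, $(-1)^{p-k-1}\equiv(-1)^{k}$; moreover $(p-k)^{2}\equiv k^{2}\pmod p$ and, using $H_{p-1}\equiv 0$ (cf.\ (\ref{con11})) together with $\sum_{i=1}^{k-1}1/(p-i)\equiv-H_{k-1}\pmod p$, one gets $H_{p-k}\equiv H_{k-1}\pmod p$. Therefore
\[
T\equiv\sum_{k=1}^{p-1}\frac{(-1)^{k}H_{k-1}}{k^{2}}=\sum_{k=1}^{p-1}\frac{(-1)^{k}H_{k}}{k^{2}}-\sum_{k=1}^{p-1}\frac{(-1)^{k}}{k^{3}}=-T-\sum_{k=1}^{p-1}\frac{(-1)^{k}}{k^{3}}\pmod p,
\]
so that $2T\equiv\sum_{k=1}^{p-1}(-1)^{k-1}/k^{3}\pmod p$. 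Separating even terms once more, $\sum_{k=1}^{p-1}(-1)^{k-1}/k^{3}=H_{p-1,3}-\tfrac14 H_{(p-1)/2,3}$, which by (\ref{con13}) and (\ref{con16}) is $\equiv\tfrac12 B_{p-3}\pmod p$; hence $T\equiv\tfrac14 B_{p-3}\pmod p$ and $\sum_{k=1}^{p-1}(-1)^{k}H_{k}/k^{2}\equiv-\tfrac14 B_{p-3}\pmod p$.

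Substituting both evaluations into the first displayed congruence gives $\sum_{k=1}^{p-1}\binom{p-1}{k}/k^{2}\equiv\tfrac{p}{2}B_{p-3}+\tfrac{p}{4}B_{p-3}=\tfrac{3p}{4}B_{p-3}\pmod{p^{2}}$, as claimed. The main obstacle is the alternating sum $T$: one must track the parity changes carefully through the reflection $k\mapsto p-k$ and note that the resulting functional equation for $T$ holds only modulo $p$ — which is exactly the precision needed, since $T$ enters the final congruence multiplied by $p$. (Feeding the identity (\ref{con23}) of Lemma~\ref{l2.4} into the double sum instead would reintroduce $\sum_{k=1}^{p-1}2^{k}H_{k}/k$, i.e.\ the quantity (\ref{con2}) that Lemma~\ref{l2.5} is meant to help prove, so the direct route above is preferable.)
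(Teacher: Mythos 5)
Your argument is correct and follows essentially the same route as the paper: expand $\binom{p-1}{k}$ via (\ref{con10}), evaluate $\sum_{k=1}^{p-1}(-1)^k/k^2$ and $\sum_{k=1}^{p-1}(-1)^k/k^3$ through the even-index split with (\ref{con12}), (\ref{con15}), (\ref{con13}), (\ref{con16}), and pin down the alternating sum $\sum_{k=1}^{p-1}(-1)^kH_k/k^2$ by the reflection $k\mapsto p-k$ together with $H_{p-k}\equiv H_{k-1}\pmod p$. The only cosmetic difference is that you reflect the sum directly while the paper first splits $H_k=H_{k-1}+1/k$ and then reflects; the computation is the same.
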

\begin{proof}
By the congruence (\ref{con10}) of Lemma~\ref{l2.1}, we have 
  \begin{equation}\label{con25}\begin{split}
\sum_{k=1}^{p-1}\frac{1}{k^2}{p-1\choose k}
&\equiv \sum_{k=1}^{p-1}\frac{(-1)^k}{k^2}(1-pH_k)\pmod{p^2}\\
&=\sum_{k=1}^{p-1}\frac{(-1)^k}{k^2}-p\sum_{k=1}^{p-1}\frac{(-1)^kH_k}{k^2}
\pmod{p^2}.
    \end{split} \end{equation}  
Using (\ref{con12}) and (\ref{con15}) of Lemma~\ref{l2.2}  we have
  \begin{equation}\label{con26}\begin{split}
\sum_{k=1}^{p-1}\frac{(-1)^k}{k^2}
&=2\sum_{1\le j\le p-1\atop 2\mid j}\frac{1}{j^2}-
\sum_{k=1}^{p-1}\frac{1}{k^2}\\
&=\frac{1}{2}\sum_{k=1}^{(p-1)/2}\frac{1}{k^2}-\sum_{k=1}^{p-1}\frac{1}{k^2}
\equiv\frac{p}{2}B_{p-3}\pmod{p^2}.
    \end{split} \end{equation}  
Similarly, by  (\ref{con13}) and (\ref{con16}) of Lemma~\ref{l2.2}  we have
  \begin{equation}\label{con27}\begin{split}
\sum_{k=1}^{p-1}\frac{(-1)^k}{k^3}
&=2\sum_{1\le j\le p-1\atop 2\mid j}\frac{1}{j^3}-
\sum_{k=1}^{p-1}\frac{1}{k^3}\\
&=\frac{1}{4}\sum_{k=1}^{(p-1)/2}\frac{1}{k^3}-\sum_{k=1}^{p-1}\frac{1}{k^3}
\equiv -\frac{1}{2}B_{p-3}\pmod{p}.
    \end{split} \end{equation} 
Since $p\mid H_{p-1}$, it follows that  for each $k=1,2,\ldots,p-1$,
  \begin{equation}\label{con28}
H_{k}=H_{p-1}-\sum_{i=1}^{p-k-1}\frac{1}{p-i}\equiv 
\sum_{i=1}^{p-k-1}\frac{1}{i}=H_{p-k-1}\pmod{p}.
 \end{equation}
Therefore, 
\begin{equation*}\begin{split}
\sum_{k=1}^{p-1}\frac{(-1)^kH_{k}}{k^2}
&=\sum_{k=1}^{p-1}\frac{(-1)^k}{k^2}
\left(H_{k-1}+\frac{1}{k}\right)\\
&=\sum_{k=1}^{p-1}\frac{(-1)^kH_{k-1}}{k^2}
+\sum_{k=1}^{p-1}\frac{(-1)^k}{k^3}
=\sum_{k=1}^{p-1}\frac{(-1)^{p-k}H_{p-k-1}}{(p-k)^2}+
\sum_{k=1}^{p-1}\frac{(-1)^k}{k^3}\\
&\equiv -\sum_{k=1}^{p-1}\frac{(-1)^{k}H_{k}}{k^2}+
\sum_{k=1}^{p-1}\frac{(-1)^k}{k^3}\pmod{p}
  \end{split}\end{equation*} 
from which taking (\ref{con27}) we have  
  \begin{equation}\label{con29}
\sum_{k=1}^{p-1}\frac{(-1)^kH_{k}}{k^2}\equiv 
\frac{1}{2}\sum_{k=1}^{p-1}\frac{(-1)^k}{k^3}\equiv -\frac{1}{4}B_{p-3}
\pmod{p}.
  \end{equation}
Finally, substituting (\ref{con26}) and (\ref{con29}) into
(\ref{con25}) we obtain (\ref{con24}).
  \end{proof}

\begin{proof}[Proof of the congruence {\rm (\ref{con2})}] 
Observe that
the identity (\ref{con23}) of Lemma~\ref{l2.4} with $n=p-1$ may be written as 
   \begin{equation}\label{con30}
\sum_{1\le k< i\le p-1}\frac{2^k}{ki}+\sum_{k=1}^{p-1}\frac{2^k}{k^2}-
\sum_{1\le k\le i\le p-1}\frac{1}{ki}
=\sum_{k=1}^{p-1}\frac{1}{k^2}{p-1\choose k}.
    \end{equation}
Further, from (\ref{con11}) of Lemma~\ref{l2.2} we see that 
$H_{p-1}\equiv 0 \,(\bmod{\,p^2})$ (the well known 
Wolstenholme's theorem \cite{a} or \cite{gr}), and thus for each 
$k=0,1,2,\ldots p-2$,
   \begin{equation*}
\sum_{i=k+1}^{p-1}\frac{1}{i}\equiv -\sum_{i=1}^{k}\frac{1}{i}
=-H_{k}\pmod{p^2}.
     \end{equation*} 
Therefore, 
  \begin{equation}\label{con31}\begin{split}
 \sum_{1\le k< i\le p-1}\frac{2^k}{ik}
&=\sum_{k=1}^{p-1}\frac{2^k}{k}\sum_{i=k+1}^{p-1}\frac{1}{i}\equiv 
-\sum_{k=1}^{p-1}\frac{2^kH_{k}}{k}\pmod{p^2}.
  \end{split} \end{equation}  
Further, from the shuffle relation 
  \begin{equation*}
2\sum_{1\le k\le i\le p-1}\frac{1}{ki}=
\left(\sum_{k=1}^{p-1}\frac{1}{k}\right)^2+\sum_{k=1}^{p-1}\frac{1}{k^2}=
H_{p-1}^2+H_{p-1,2}
  \end{equation*}
by setting the Wolstenholme's congruence 
$H_{p-1}\equiv 0 \,(\bmod{\,p^2})$ and (\ref{con12}) of Lemma~\ref{l2.2}, we 
obtain 
\begin{equation}\label{con32}
\sum_{1\le k\le i\le p-1}\frac{1}{ki}\equiv \frac{p}{3}B_{p-3}\pmod{p^2}.
  \end{equation}
Finally, substituting (\ref{con24}) of Lemma~\ref{l2.5}, 
(\ref{con18}) of Lemma~\ref{l2.3}, (\ref{con31}) and (\ref{con32}) 
into the equality (\ref{con30}), we get 
  $$
\sum_{k=1}^{p-1}\frac{2^kH_k}{k}\equiv -q_p(2)^2+
\frac{2}{3}pq_p(2)^3+\frac{p}{12}B_{p-3}\pmod{p^2}
  $$
which is the desired congruence (\ref{con2}).
 
\end{proof}

\section{Proof of the congruences (\ref{con3}) and (\ref{con4})}

  % Lemma 3.1
 \begin{lemma}\label{l3.1} Let $n$ be a  positive integer. Then 
  \begin{equation}\label{con33}
\sum_{k=1}^{n-1}\frac{(-2)^k}{k}{n\choose k}=\left\{
\begin{array}{ll}
-2H_{n-1}+H_{(n-1)/2}+\frac{2^n-2}{n} & if\,\, n\,\, is \,\, odd\\
-2H_n+H_{n/2}-\frac{2^n}{n} & if\,\, n\,\, is \,\, even.\\
\end{array}\right.
\end{equation}
 \end{lemma}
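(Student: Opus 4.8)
The plan is to replace $1/k$ by the integral $\int_0^1 x^{k-1}\,dx$, collapse the sum via the binomial theorem, and then evaluate the resulting elementary integral by the substitution $u=1-2x$.

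First I would write, for $n\ge 1$,
$$\sum_{k=1}^{n-1}\frac{(-2)^k}{k}{n\choose k}
=\sum_{k=1}^{n-1}{n\choose k}(-2)^k\int_0^1 x^{k-1}\,dx
=\int_0^1\frac{1}{x}\left(\sum_{k=1}^{n-1}{n\choose k}(-2x)^k\right)dx
=\int_0^1\frac{(1-2x)^n-1-(-2x)^n}{x}\,dx,$$
where the last step uses $\sum_{k=0}^{n}{n\choose k}(-2x)^k=(1-2x)^n$ to isolate the terms $k=0$ and $k=n$. The integrand is a polynomial in $x$: for $n\ge 2$ the numerator is divisible by $x$ since both $(1-2x)^n-1$ and $-(-2x)^n$ vanish at $x=0$, and for $n=1$ the numerator is identically $0$, matching the empty sum. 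So there is no convergence issue.

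Next I would split the integral as $\int_0^1\frac{(1-2x)^n-1}{x}\,dx-\int_0^1\frac{(-2x)^n}{x}\,dx$. The second piece equals $(-2)^n\int_0^1 x^{n-1}\,dx=(-2)^n/n$. For the first piece the substitution $u=1-2x$ (so $dx=-\tfrac12\,du$, $1/x=2/(1-u)$, and the limits $x=0,1$ become $u=1,-1$) gives
$$\int_0^1\frac{(1-2x)^n-1}{x}\,dx=\int_{-1}^{1}\frac{u^n-1}{1-u}\,du
=-\int_{-1}^{1}\bigl(1+u+\cdots+u^{n-1}\bigr)\,du
=-2\!\!\sum_{0\le 2m\le n-1}\frac{1}{2m+1},$$
since $\int_{-1}^{1}u^j\,du$ equals $2/(j+1)$ for even $j$ and $0$ for odd $j$. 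The last sum is exactly the sum of the reciprocals of the odd integers not exceeding $n$, which equals $H_n-\tfrac12 H_{\lfloor n/2\rfloor}$ because the even reciprocals up to $n$ sum to $\tfrac12 H_{\lfloor n/2\rfloor}$.

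Finally I would assemble $\sum_{k=1}^{n-1}\frac{(-2)^k}{k}{n\choose k}=-2H_n+H_{\lfloor n/2\rfloor}-(-2)^n/n$ and split by parity. If $n$ is even then $\lfloor n/2\rfloor=n/2$ and $(-2)^n=2^n$, giving the stated $-2H_n+H_{n/2}-2^n/n$. If $n$ is odd then $\lfloor n/2\rfloor=(n-1)/2$ and $(-2)^n=-2^n$, so using $H_n=H_{n-1}+1/n$ we get $-2H_{n-1}+H_{(n-1)/2}+(2^n-2)/n$, as claimed. I do not expect any real obstacle here; the only point needing care is the parity bookkeeping — keeping track of which $j$ contribute to $\int_{-1}^1 u^j\,du$, and translating ``odd reciprocals up to $n$'' into $H_n$ and $H_{\lfloor n/2\rfloor}$ in each residue class. (An alternative, purely algebraic route would recast the identity as a partial-fraction/telescoping statement for $\frac1k{n\choose k}$, but the integral evaluation above seems the most transparent.)
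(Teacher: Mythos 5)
Your proof is correct; I checked the integral representation, the substitution $u=1-2x$, the parity evaluation of $\int_{-1}^{1}u^{j}\,du$, the identification of the odd-reciprocal sum with $H_n-\tfrac12 H_{\lfloor n/2\rfloor}$, and the final parity bookkeeping (including the degenerate case $n=1$), and everything goes through. The underlying idea is the same as the paper's -- replace $1/k$ by $\int_0^1 x^{k-1}\,dx$ and collapse with the binomial theorem -- but your execution differs in a useful way: the paper treats the two parities separately, quoting for odd $n$ the identity $\sum_{k=1}^{n-1}\frac{(-1)^k}{k}\binom{n}{k}x^k=\sum_{k=1}^{n-1}\frac{(1-x)^k-1}{k}-\frac{1-x^n+(x-1)^n}{n}$ from the proof of Lemma 4.1 in the cited work of Z.~H.~Sun, and redoing an integral computation (its formulas (35)--(36)) for even $n$, in both cases first converting back to the sum $\sum_k\frac{(1-x)^k-1}{k}$ and only then specializing $x=2$. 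You instead strip off the $k=0$ and $k=n$ terms once and for all, evaluate $\int_{-1}^{1}\frac{u^n-1}{1-u}\,du$ directly via the geometric sum and the vanishing of odd-power integrals, and obtain the single closed form $-2H_n+H_{\lfloor n/2\rfloor}-(-2)^n/n$ before splitting by parity at the very end. This buys a self-contained, case-free argument with no appeal to the external identity, at essentially no extra cost; the paper's route, by contrast, leans on a ready-made formula and so is shorter on the page for the odd case.
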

\begin{proof}
In the proof of Lemma 4.1 in \cite{s2} it was proved 
that of each positive odd integer $n$ holds 
 \begin{equation}\label{con34}
\sum_{k=1}^{n-1}\frac{(-1)^k}{k}{n\choose k}x^k
=\sum_{k=1}^{n-1}\frac{(1-x)^k-1}{k}-\frac{1-x^n+(x-1)^n}{n},\,\, x\in \Bbb R.
   \end{equation}   
Taking $x=2$ into (\ref{con34}),  we obtain 
  \begin{equation*}\begin{split}
\sum_{k=1}^{n-1}\frac{(-2)^k}{k}{n\choose k}
&=\sum_{k=1}^{n-1}\frac{(-1)^k-1}{k}-\frac{2-2^n}{n}\\
&=-2\sum_{1\le k\le n-1\atop k\,\,\mathrm{odd}}\frac{1}{k}
+\frac{2^n-2}{n}=-2\left(\sum_{k=1}^{n-1}\frac{1}{k}-
\sum_{1\le k\le n-1\atop k\,\,\mathrm{even}\,\, k}\frac{1}{k}\right)
+\frac{2^n-2}{n}\\
&=-2\left(H_{n-1}-\frac{1}{2}H_{(n-1)/2}\right)+\frac{2^n-2}{n}\\
&=-2H_{n-1}+H_{(n-1)/2}+\frac{2^n-2}{n}.
   \end{split}\end{equation*}
This proves the first equality in (\ref{con33}).

Now suppose that $n$ is even. 
Then by the binomial formula, for each $t>0$ and $x\in \Bbb R$, we have 
\begin{equation}\label{con35}
\frac{(1-xt)^{n}-1}{t}=\sum_{k=1}^{n}
\frac{{n\choose k}(-xt)^k}{t}\,dt=
\sum_{k=1}^{n}{n\choose k}(-x)^kt^{k-1}.
  \end{equation}
Since $\int_0^1 t^{k-1}\,dt=1/k$, setting $y=1-xt$ (cf. proof of Lemma
4.1 in \cite{s2}) (\ref{con35}) gives
  \begin{equation}\label{con36}\begin{split}
\sum_{k=1}^{n}\frac{(-x)^k}{k}{n\choose k}
&=\int_0^1 \sum_{k=1}^{n}{n\choose k}(-x)^kt^{k-1}\,dt=
\int_0^1\frac{(1-xt)^{n}-1}{t}\,dt\\
&=-\frac{1}{x}\int_{1}^{1-x}\frac{x(y^{n}-1)}{1-y}\,dy=
\int_{1}^{1-x}\sum_{k=1}^{n}y^{k-1}\,dy\\
&=\sum_{k=1}^{n}\frac{(1-x)^k-1}{k}.
     \end{split}\end{equation}
Taking $x=2$ into (\ref{con36}), we obtain 
  $$
\sum_{k=1}^{n-1}\frac{(-2)^k}{k}{n\choose k}+\frac{2^n}{n}=
-2\sum_{1\le k\le n\atop k\,\,\mathrm{odd}}\frac{1}{k}=-2H_n+H_{n/2}
 $$
which yields the second identity of (\ref{con33}).
\end{proof}

  % Lemma 3.2
 \begin{lemma}\label{l3.2} Let $p>3$ be a  prime. Then 
  \begin{equation}\label{con37}
\sum_{k=1}^{p-1}\frac{(-2)^k}{k}{p\choose k}
\equiv pq_p(2)^2-\frac{2}{3}p^2q_p(2)^3+\frac{1}{12}p^2B_{p-3}\pmod{p^3}
   \end{equation}   
and
 \begin{equation}\begin{split}\label{con38}
\sum_{k=1}^{p-1}\frac{(-2)^k}{k}{p-1\choose k}
\equiv & -2q_p(2)+pq_p(2)^2-\frac{2}{3}p^2q_p(2)^3\\
&+\frac{1}{12}p^2B_{p-3}\pmod{p^3}.
  \end{split}\end{equation}
\end{lemma}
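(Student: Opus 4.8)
The strategy is to feed the closed forms from Lemma~\ref{l3.1} (with $n=p$, odd) directly into the known $p$-adic expansions collected in Lemma~\ref{l2.2} and Lemma~\ref{l2.3}. For (\ref{con37}) I take the odd-$n$ case of (\ref{con33}) with $n=p$:
\[
\sum_{k=1}^{p-1}\frac{(-2)^k}{k}\binom{p}{k}=-2H_{p-1}+H_{(p-1)/2}+\frac{2^p-2}{p}.
\]
Now $H_{p-1}\equiv -\tfrac{p^2}{3}B_{p-3}\pmod{p^3}$ by (\ref{con11}); $H_{(p-1)/2}$ is given mod $p^3$ by (\ref{con14}), namely $-2q_p(2)+pq_p(2)^2-\tfrac{2p^2}{3}q_p(2)^3-\tfrac{7p^2}{12}B_{p-3}$; and $\frac{2^p-2}{p}=2q_p(2)$ exactly by the definition of the Fermat quotient. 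Adding these three contributions, the $-2q_p(2)$ from $H_{(p-1)/2}$ cancels the $2q_p(2)$ from $\tfrac{2^p-2}{p}$, the $-2H_{p-1}$ term contributes $\tfrac{2p^2}{3}B_{p-3}$, and one is left with $pq_p(2)^2-\tfrac{2}{3}p^2q_p(2)^3+\bigl(-\tfrac{7}{12}+\tfrac{2}{3}\bigr)p^2B_{p-3}=pq_p(2)^2-\tfrac{2}{3}p^2q_p(2)^3+\tfrac{1}{12}p^2B_{p-3}$, which is exactly (\ref{con37}).

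For (\ref{con38}), the cleanest route is to relate $\binom{p-1}{k}$ to $\binom{p}{k}$ via the Pascal-type identity $\binom{p}{k}=\binom{p-1}{k}+\binom{p-1}{k-1}$, or more efficiently via $\binom{p}{k}=\frac{p}{k}\binom{p-1}{k-1}$ together with $\binom{p-1}{k}=\binom{p}{k}-\binom{p-1}{k-1}=\binom{p}{k}-\frac{k}{p}\binom{p}{k}=\frac{p-k}{p}\binom{p}{k}$. Hence
\[
\sum_{k=1}^{p-1}\frac{(-2)^k}{k}\binom{p-1}{k}=\sum_{k=1}^{p-1}\frac{(-2)^k}{k}\binom{p}{k}-\frac1p\sum_{k=1}^{p-1}(-2)^k\binom{p}{k}.
\]
The first sum on the right is (\ref{con37}). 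For the second, I use the binomial theorem: $\sum_{k=0}^{p}(-2)^k\binom{p}{k}=(1-2)^p=-1$, so $\sum_{k=1}^{p-1}(-2)^k\binom{p}{k}=-1-1-(-2)^p=-2+2^p=2(2^{p-1}-1)=2pq_p(2)$, giving $-\frac1p\cdot 2pq_p(2)=-2q_p(2)$. Adding this to (\ref{con37}) yields precisely the right-hand side of (\ref{con38}).

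**Where the work actually lies.** Neither step is deep; the content is entirely in correctly invoking the mod-$p^3$ expansions of Lemma~\ref{l2.2} and bookkeeping the Bernoulli-number coefficient. The one point that needs a word of care is the derivation of $\binom{p-1}{k}=\frac{p-k}{p}\binom{p}{k}$ (equivalently $\binom{p-1}{k-1}=\frac{k}{p}\binom{p}{k}$), which is an exact integer identity and so causes no loss of precision when dividing by $p$ — the sum $\sum(-2)^k\binom{p}{k}$ is divisible by $p^2$ minus a controlled term, and in fact $2pq_p(2)$ is exactly divisible by $p$ once, matching the single power of $p$ we divide out. I would state these two exact identities as a one-line preliminary, then present the two computations above in display form. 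The only genuine obstacle is arithmetic vigilance with the sign of $(-2)^p=-2^p$ and the fact that $2^p\equiv 2\pmod p$ must be upgraded to the exact statement $2^p-2=2pq_p(2)$ to retain the mod-$p^3$ accuracy demanded by the lemma.
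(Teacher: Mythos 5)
Your proposal is correct. For (\ref{con37}) you do exactly what the paper does: take the odd case of (\ref{con33}) with $n=p$, insert (\ref{con11}) and (\ref{con14}), and use the exact identity $\frac{2^p-2}{p}=2q_p(2)$; your bookkeeping ($\frac{2}{3}-\frac{7}{12}=\frac{1}{12}$) matches the paper's final congruence (and, incidentally, your intermediate line $-2H_{p-1}+H_{(p-1)/2}+2q_p(2)$ is the correct one, whereas the paper's displayed intermediate expression contains a typo). For (\ref{con38}) you genuinely diverge: the paper goes back to Lemma~\ref{l3.1}, now using the even case of (\ref{con33}) with $n=p-1$ (the $k=p-1$ term $\frac{2^{p-1}}{p-1}$ cancels the $-\frac{2^n}{n}$ term, leaving $-2H_{p-1}+H_{(p-1)/2}$, and then (\ref{con11}) and (\ref{con14}) are applied a second time), while you instead use the exact identity $\binom{p-1}{k}=\frac{p-k}{p}\binom{p}{k}$ together with $\sum_{k=1}^{p-1}(-2)^k\binom{p}{k}=2^p-2=2pq_p(2)$ from the binomial theorem, so that the two sums differ exactly by $2q_p(2)$ and (\ref{con38}) drops out of (\ref{con37}) with no further congruence input. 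Your route makes the second congruence an immediate exact corollary of the first and avoids the even case of Lemma~\ref{l3.1} and a second pass through the harmonic-number expansions; the paper's route stays uniformly inside the framework of Lemma~\ref{l3.1}, whose even case it has available anyway. Both are valid mod $p^3$, since every identity you divide by $p$ is exact.
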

\begin{proof}
Setting $n=p$ in the first equality 
of (\ref{con33}) of Lemma~\ref{l3.1}  and using the congruences 
(\ref{con11}) and (\ref{con14}) from Lemma~\ref{l2.2}  reduced modulo $p^2$,
 we obtain 
  \begin{equation*}\begin{split}
\sum_{k=1}^{p-1}\frac{(-2)^k}{k}{p\choose k}
&=H_{p-1}-\frac{1}{2}H_{(p-1)/2}+2q_p(2)\\
&\equiv pq_p(2)^2-\frac{2}{3}p^2q_p(2)^3+\frac{1}{12}p^2B_{p-3}\pmod{p^3}.
   \end{split}\end{equation*}
This proves (\ref{con37}). 
Taking $n=p-1$ into the second equality of (\ref{con33}) from  Lemma~\ref{l3.1}
and 
substituting the congruences (\ref{con11}) and (\ref{con14}) from 
Lemma~\ref{l2.2} 
into this, we obtain 
  \begin{equation*}\begin{split}
\sum_{k=1}^{p-1}\frac{(-2)^k}{k}{p-1\choose k}
&=\sum_{k=1}^{p-1}\frac{(-2)^k}{k}{p-2\choose k}+\frac{2^{p-1}}{p-1}\\
&=-2H_{p-1}+H_{(p-1)/2}\\
&\equiv -2q_p(2)+ pq_p(2)^2-\frac{2p^2}{3}q_p(2)^3+\frac{1}{12}p^2B_{p-3}
\pmod{p^3}.
   \end{split}\end{equation*}
This is the congruence (\ref{con38}) and the proof is completed.
\end{proof}

\begin{proof}[Proof of the congruences
 {\rm(\ref{con3})} and  {\rm(\ref{con4})}]
First notice that
 \begin{equation}\label{con39}\begin{split}
\sum_{k=1}^{p-1}\frac{2^kH_k^2}{k}
&=\sum_{k=1}^{p-1}\frac{2^k\left(H_{k-1}+\frac{1}{k}\right)^2}{k}\\
&=\sum_{k=1}^{p-1}\frac{2^kH_{k-1}^2}{k}+2\sum_{k=1}^{p-1}
\frac{2^kH_{k-1}}{k^2}+\sum_{k=1}^{p-1}\frac{2^k}{k^3}.
  \end{split}\end{equation}
Further, using (\ref{con10}) of Lemma~\ref{l2.1} and the identity
${p-1\choose k-1}=\frac{k}{p}{p\choose k}$, we find that
 \begin{equation}\label{con40}\begin{split}
\sum_{k=1}^{p-1}\frac{2^kpH_{k-1}}{k^2}
&\equiv \sum_{k=1}^{p-1}\frac{2^k}{k^2}\left(1-(-1)^{k-1}{p-1
\choose k-1}\right)\pmod{p^2}\\
&=\sum_{k=1}^{p-1}\frac{2^k}{k^2}+\sum_{k=1}^{p-1}\frac{(-2)^k}{k^2}
{p-1\choose k-1}\\
&=\sum_{k=1}^{p-1}\frac{2^k}{k^2}+
\frac{1}{p}\sum_{k=1}^{p-1}\frac{(-2)^k}{k}{p\choose k}.
  \end{split}\end{equation}
Substituting the congruences (\ref{con18}) from Lemma~\ref{l2.3} and 
(\ref{con37}) from Lemma~\ref{l3.2} into  (\ref{con40}), we 
obtain 
 \begin{equation*}\begin{split}
\sum_{k=1}^{p-1}\frac{2^kpH_{k-1}}{k^2}\equiv \frac{5p}{4}B_{p-3}\pmod{p^2},
  \end{split}\end{equation*}
or equivalently,
 \begin{equation}\label{con41}\begin{split}
\sum_{k=1}^{p-1}\frac{2^kH_{k-1}}{k^2}\equiv \frac{5}{4}B_{p-3}\pmod{p}.
  \end{split}\end{equation}
Now we have 
   \begin{equation*}\begin{split}
\sum_{k=1}^{p-1}\frac{2^kH_k}{k^2}=
\sum_{k=1}^{p-1}\frac{2^kH_{k-1}}{k^2}+\sum_{k=1}^{p-1}\frac{2^k}{k^3},
    \end{split}\end{equation*}
whence inserting (\ref{con41}) and (\ref{con21}) from Lemma~\ref{l2.3}
we immediately obtain (\ref{con3}).

Since by (\ref{con28}) $H_{p-k-1}\equiv H_k\,(\bmod{\,p})$
for each $k=1,2,\ldots p-1$ and  $2^p\equiv 2 \,(\bmod{\,p})$, we have 
\begin{equation}\label{con42}\begin{split}
\sum_{k=1}^{p-1}\frac{2^kH_{k-1}}{k^2}
&= \sum_{k=1}^{p-1}\frac{2^{p-k}H_{p-k-1}}{(p-k)^2}\\
&\equiv \sum_{k=1}^{p-1}\frac{2^{1-k}H_k}{k^2}=
2\sum_{k=1}^{p-1}\frac{H_k}{k^2\cdot 2^k}\pmod{p}.
 \end{split}\end{equation}
Comparing (\ref{con41}) and (\ref{con42}) yields 
(\ref{con4}).
\end{proof}

% Section 4
\section{Proof of the congruences (\ref{con5}), (\ref{con5.1}) 
and (\ref{con6})}

  % Lemma 4.1
 \begin{lemma}\label{l4.1} Let $n$ be a  positive integer. Then 
  \begin{equation}\label{con43}
\sum_{k=1}^{n-1}\frac{(-2)^{k-1}}{k}{n\choose k-1}=\left\{
\begin{array}{ll}
\frac{2^{n-1}(1-n)}{n+1} & if\,\, n\,\, is \,\, odd\\
\frac{(n-1)2^{n-1}+1}{n+1} &if \,\, n\,\, is \,\, even.\\
\end{array}\right.
\end{equation}
 \end{lemma}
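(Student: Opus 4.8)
The identity in Lemma~\ref{l4.1} is a closed-form evaluation of the sum $\sum_{k=1}^{n-1}(-2)^{k-1}\binom{n}{k-1}/k$, so the plan is to convert the binomial-over-linear expression into an integral and evaluate, exactly as was done for Lemma~\ref{l3.1}. First I would reindex by setting $j=k-1$, so the sum becomes $\sum_{j=0}^{n-2}\frac{(-2)^{j}}{j+1}\binom{n}{j}$. The weight $\frac{1}{j+1}$ is the integral $\int_0^1 t^{j}\,dt$, so interchanging sum and integral gives $\int_0^1\sum_{j=0}^{n-2}\binom{n}{j}(-2t)^{j}\,dt$. The inner sum is the binomial expansion of $(1-2t)^n$ missing its top two terms $\binom{n}{n-1}(-2t)^{n-1}+\binom{n}{n}(-2t)^{n}$, i.e. it equals $(1-2t)^n - n(-2t)^{n-1} - (-2t)^n$.

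Next I would integrate termwise over $[0,1]$. We have $\int_0^1 (1-2t)^n\,dt = \frac{1-(-1)^{n+1}}{2(n+1)} = \frac{1-(-1)^{n+1}}{2(n+1)}$, $\int_0^1 (-2t)^{n-1}\,dt = \frac{(-2)^{n-1}}{n}$, and $\int_0^1 (-2t)^{n}\,dt = \frac{(-2)^{n}}{n+1}$. Collecting these,
\begin{equation*}
\sum_{k=1}^{n-1}\frac{(-2)^{k-1}}{k}\binom{n}{k-1}
= \frac{1-(-1)^{n+1}}{2(n+1)} - n\cdot\frac{(-2)^{n-1}}{n} - \frac{(-2)^n}{n+1}
= \frac{1-(-1)^{n+1}}{2(n+1)} - (-2)^{n-1} - \frac{(-2)^n}{n+1}.
\end{equation*}
Now I would split into the two parity cases. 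If $n$ is odd, then $(-1)^{n+1}=1$ so the first term vanishes, $(-2)^{n-1}=2^{n-1}$ and $(-2)^n=-2^n$, giving $-2^{n-1}+\frac{2^n}{n+1} = \frac{-2^{n-1}(n+1)+2\cdot 2^{n-1}}{n+1} = \frac{2^{n-1}(1-n)}{n+1}$, which is the first line. If $n$ is even, then $(-1)^{n+1}=-1$ so the first term is $\frac{2}{2(n+1)}=\frac{1}{n+1}$, while $(-2)^{n-1}=-2^{n-1}$ and $(-2)^n=2^n$, giving $\frac{1}{n+1}+2^{n-1}-\frac{2^n}{n+1} = \frac{1+2^{n-1}(n+1)-2\cdot 2^{n-1}}{n+1} = \frac{(n-1)2^{n-1}+1}{n+1}$, which is the second line.

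There is no real obstacle here; the only points requiring a little care are the reindexing (making sure the upper limit $n-2$ correctly captures all nonzero terms, since $\binom{n}{j}=0$ never occurs in this range but the original sum stopped at $k=n-1$, i.e. $j=n-2$, precisely removing the two top binomial terms) and the bookkeeping of signs when passing from $(-2t)^{n-1}$, $(-2t)^n$ to $(-2)^{n-1}$, $(-2)^n$ in each parity case. One could alternatively avoid the integral entirely by using the identity $\frac{1}{k}\binom{n}{k-1}=\frac{1}{n}\binom{n+1}{k}\cdot\frac{?}{?}$-type manipulations and a telescoping/Abel-summation argument, but the integral representation is the cleanest and mirrors the technique already established in the proof of Lemma~\ref{l3.1}, so I would present it that way.
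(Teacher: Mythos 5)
Your proof is correct: the reindexing, the integral representation $\tfrac{1}{j+1}=\int_0^1 t^j\,dt$, the identification of the truncated binomial expansion $(1-2t)^n-n(-2t)^{n-1}-(-2t)^n$, the three elementary integrals, and the two parity cases all check out (and the result agrees with direct computation for $n=1,2,3,4$). However, it is a genuinely different route from the paper. The paper does not integrate afresh: it takes the already-proved identity (\ref{con33}) of Lemma~\ref{l3.1}, rescales it by $-1/2$ to get (\ref{con44}), and then writes $\binom{n}{k-1}=\binom{n+1}{k}-\binom{n}{k}$, so that the target sum becomes a difference of two sums of the type covered by (\ref{con44}) (one with $n+1$, one with $n$, of opposite parity); the harmonic-number terms $H_{n\pm 1}$, $H_{[\,\cdot/2\,]}$ then cancel and only the rational closed form survives, again split into the odd and even cases. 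Your argument is more self-contained and arguably cleaner -- it never introduces harmonic numbers at all, and the only input is the binomial theorem plus termwise integration of a polynomial, essentially the same integral device the paper uses inside the proof of Lemma~\ref{l3.1} for even $n$ (identity (\ref{con36}), which in fact holds for all $n$). What the paper's route buys is economy within its own structure: having established Lemma~\ref{l3.1}, it extracts Lemma~\ref{l4.1} by a single application of Pascal's rule, at the cost of somewhat heavier bookkeeping to make the harmonic terms cancel. Either proof is acceptable; the half-finished alternative you sketch at the end (the $\frac{1}{k}\binom{n}{k-1}$ manipulation) is unnecessary and can be dropped.
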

\begin{proof}
Multiplying by $-1/2$  the identity (\ref{con33}) of Lemma~\ref{l3.1},
it becomes
     \begin{equation}\label{con44}
\sum_{k=1}^{n-1}\frac{(-2)^{k-1}}{k}{n\choose k}=\left\{
\begin{array}{ll}
H_{n-1}-\frac{1}{2}H_{(n-1)/2}-\frac{2^{n-1}-1}{n} &
{\rm if\,\,} n\,\, {\rm is \,\, odd}\\
H_n-\frac{1}{2}H_{n/2}+\frac{2^{n-1}}{n} & 
{\rm if}\,\, n\,\, {\rm is \,\, even}.\\
\end{array}\right.
\end{equation}
Now the identities ${n\choose k-1}={n+1\choose k}-{n\choose k}$
and (\ref{con44})  for any odd positive integer $n$ give
    \begin{equation}\label{con45}\begin{split}
 &\sum_{k=1}^{n-1}\frac{(-2)^{k-1}}{k}{n\choose k-1}
=\sum_{k=1}^{n-1}\frac{(-2)^{k-1}}{k}{n+1\choose k}-
\sum_{k=1}^{n-1}\frac{(-2)^{k-1}}{k}{n\choose k}\\
=&\sum_{k=1}^{n}\frac{(-2)^{k-1}}{k}{n+1\choose k}-\frac{(-2)^{n-1}(n+1)}{n}
-\sum_{k=1}^{n-1}\frac{(-2)^{k-1}}{k}{n\choose k}\\
=&\left(H_{n+1}-\frac{1}{2}H_{(n+1)/2}+\frac{2^n}{n+1}\right)-
\frac{2^{n-1}(n+1)}{n}\\
-&\left(H_{n-1}-\frac{1}{2}H_{(n-1)/2}-\frac{2^{n-1}-1}{n}\right)\\
=&(H_{n+1}-H_{n-1})-\frac{1}{2}(H_{(n+1)/2}-H_{(n-1)/2})+\frac{2^n}{n+1}
-\frac{n2^{n-1}+1}{n}\\
=&\left(\frac{1}{n}+\frac{1}{n+1}\right)-\frac{1}{n+1}+\frac{2^n}{n+1}
-\frac{n2^{n-1}+1}{n}\\
=&\frac{2^{n-1}(1-n)}{n+1}.
  \end{split}\end{equation}
Similarly, using the identities ${n\choose k-1}={n+1\choose k}-{n\choose k}$
and (\ref{con44})  for even positive integer $n$ we have
    \begin{equation}\label{con46}\begin{split}
&\sum_{k=1}^{n-1}\frac{(-2)^{k-1}}{k}{n\choose k-1}
=\sum_{k=1}^{n-1}\frac{(-2)^{k-1}}{k}{n+1\choose k}-
\sum_{k=1}^{n-1}\frac{(-2)^{k-1}}{k}{n\choose k}\\
=&\sum_{k=1}^{n}\frac{(-2)^{k-1}}{k}{n+1\choose k}-\frac{(-2)^{n-1}(n+1)}{n}
-\sum_{k=1}^{n-1}\frac{(-2)^{k-1}}{k}{n\choose k}\\
=&\left(H_{n}-\frac{1}{2}H_{n/2}-\frac{2^n-1}{n+1}\right)
+\frac{2^{n-1}(n+1)}{n}\\
&-\left(H_{n}-\frac{1}{2}H_{n/2}+\frac{2^{n-1}}{n}\right)\\
=&\frac{1-2^n}{n+1}+2^{n-1}=\frac{(n-1)2^{n-1}+1}{n+1}.
  \end{split}\end{equation}
The equalities (\ref{con45}) and (\ref{con46})  
are in fact  (\ref{con43}) and the proof is completed.
\end{proof}

% Lemma 4.2
  \begin{lemma}\label{l4.2} Let $n$ be an arbitrary positive integer. Then
 \begin{equation}\label{con47}
(-1)^{n}\sum_{k=1}^{n-1} (-1)^{k-1}{n\choose k}2^kH_k
=(2^n-2)H_{n-1}+H_{\left[n/2\right]}
+\frac{2^n-2}{n}
 \end{equation}
where $[x]$ denotes the integer part of $x$.
\end{lemma}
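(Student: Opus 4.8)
The plan is to reduce (\ref{con47}) to the evaluation of $\sum_{k=1}^{n-1}(-2)^k\binom{n}{k}/k$ already carried out in Lemma~\ref{l3.1}; this is presumably why Lemma~\ref{l3.1} was proved for arbitrary $n$ and for both parities. First I would record the classical finite identities
\[
H_k=\sum_{j=1}^{k}\frac{(-1)^{j-1}}{j}\binom{k}{j}
\qquad\text{and}\qquad
\sum_{j=1}^{n}\frac{(-1)^{j-1}}{j}\binom{n}{j}=H_n ,
\]
both of which are well known (and also follow from the integral computation in the proof of Lemma~\ref{l3.1} upon setting $x=1$).

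Substituting the first identity into the left-hand side of (\ref{con47}), interchanging the order of summation, and using $\binom{n}{k}\binom{k}{j}=\binom{n}{j}\binom{n-j}{k-j}$, one obtains
\[
\sum_{k=1}^{n-1}(-1)^{k-1}\binom{n}{k}2^kH_k
=\sum_{j=1}^{n-1}\frac{(-1)^{j-1}}{j}\binom{n}{j}\sum_{k=j}^{n-1}(-1)^{k-1}\binom{n-j}{k-j}2^k .
\]
The inner sum is elementary: putting $k=j+i$ it equals $(-1)^{j-1}2^j\sum_{i=0}^{n-1-j}(-2)^i\binom{n-j}{i}$, and since $\sum_{i=0}^{n-j}(-2)^i\binom{n-j}{i}=(-1)^{n-j}$, subtracting the top term $i=n-j$ shows the inner sum equals $(-1)^{j-1}2^j\bigl((-1)^{n-j}-(-2)^{n-j}\bigr)=(-1)^{n-1}(2^j-2^n)$. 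Hence the left-hand side of (\ref{con47}) equals
\[
(-1)^{n-1}\Bigl(\sum_{j=1}^{n-1}\frac{(-1)^{j-1}2^j}{j}\binom{n}{j}-2^n\sum_{j=1}^{n-1}\frac{(-1)^{j-1}}{j}\binom{n}{j}\Bigr).
\]

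Here the first inner sum is $-\sum_{j=1}^{n-1}(-2)^j\binom{n}{j}/j$, evaluated by (\ref{con33}) of Lemma~\ref{l3.1} in the two cases $n$ odd and $n$ even, and the second inner sum is $H_n-(-1)^{n-1}/n$ by the identity above after removing the $j=n$ term. Inserting both, replacing $H_n$ by $H_{n-1}+1/n$ and $[n/2]$ by $(n-1)/2$ or $n/2$ according to the parity of $n$, and simplifying, one arrives at $(2^n-2)H_{n-1}+H_{[n/2]}+(2^n-2)/n$ after multiplying by $(-1)^n$, which is (\ref{con47}). The only genuine labor is this final case-by-case simplification; it is routine but sign-sensitive, so I would carry out the two parities separately. (Alternatively, one avoids the parity split by using $H_k=\int_0^1(1-t^k)/(1-t)\,dt$: interchanging sum and integral turns the left-hand side of (\ref{con47}) into $\int_0^1\bigl((1-2t)^n-(-1)^n\bigr)/(1-t)\,dt+(-1)^n2^nH_n$, and the substitution $s=1-2t$, expansion of $(s^n-(-1)^n)/(s+1)$ as a polynomial, and term-by-term integration over $[-1,1]$ — only even powers survive — give $(-1)^{n-1}(2H_n-H_{[n/2]})$ uniformly in $n$, using $\sum_{1\le m\le n,\ m\ \mathrm{odd}}1/m=H_n-\tfrac12H_{[n/2]}$; combining the two terms then yields (\ref{con47}) directly.)
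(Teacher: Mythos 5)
Your argument is correct, and both of your intermediate computations check out: the interchange of summation with $\binom{n}{k}\binom{k}{j}=\binom{n}{j}\binom{n-j}{k-j}$ is valid, the inner sum does collapse to $(-1)^{n-1}(2^j-2^n)$, and the final case-by-case bookkeeping (using $H_n=H_{n-1}+1/n$ and $[n/2]=(n-1)/2$ or $n/2$) reproduces (\ref{con47}) in both parities; the integral variant in your parenthetical is also sound, since $\bigl(s^n-(-1)^n\bigr)/(s+1)=\sum_{m=0}^{n-1}(-1)^{n-1-m}s^m$ and only even powers survive on $[-1,1]$. However, your route is genuinely different from the paper's. The paper proves (\ref{con47}) by induction on $n$: writing $S_n=\sum_{k=1}^{n-1}(-1)^{k-1}\binom{n}{k}2^kH_k$, it uses Pascal's rule $\binom{n+1}{k}=\binom{n}{k}+\binom{n}{k-1}$ and $H_k=H_{k-1}+1/k$ to derive the recursion $(-1)^{n+1}S_{n+1}=(-1)^nS_n+\frac{2^{n+1}-1-(-1)^n}{n+1}+2^nH_n$, where the extra sum $\sum_{k=1}^{n-1}\frac{(-2)^{k-1}}{k}\binom{n}{k-1}$ is evaluated by a separate Lemma~\ref{l4.1} (itself deduced from Lemma~\ref{l3.1} via $\binom{n}{k-1}=\binom{n+1}{k}-\binom{n}{k}$), and then the induction hypothesis closes the argument. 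You instead expand $H_k=\sum_{j=1}^{k}\frac{(-1)^{j-1}}{j}\binom{k}{j}$, swap the order of summation, and reduce everything directly to Lemma~\ref{l3.1}, which makes the proof non-inductive and renders Lemma~\ref{l4.1} unnecessary; the integral version even avoids the parity split altogether. What the paper's induction buys is that it stays entirely within the elementary identities already set up (no binomial transform of $H_k$, no double-sum interchange), at the cost of an auxiliary lemma and a somewhat sign-heavy recursion; your approach buys brevity and a direct structural link to the already-proved evaluation of $\sum_{k=1}^{n-1}\frac{(-2)^k}{k}\binom{n}{k}$, at the cost of quoting (or reproving, e.g.\ from (\ref{con36}) at $x=1$) the classical identity for $H_k$.
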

\begin{proof} 
We proceed by induction on $n$. An immediate computation shows
that (\ref{con47}) is satisfied for $n=1$ and $n=2$.
For every $n=1,2,\ldots$ put 
  $$
S_n=\sum_{k=1}^{n-1} (-1)^{k-1}{n\choose k}2^kH_k.
  $$
Then using the identity ${n+1\choose k}={n\choose k}+{n\choose k-1}$
we have 
  \begin{equation}\label{con48}\begin{split}
&S_{n+1}=\sum_{k=1}^{n}(-1)^{k-1}{n+1\choose k}2^kH_k\\
=&\sum_{k=1}^{n-1} (-1)^{k-1}{n+1\choose k}2^kH_k+(-1)^{n-1}(n+1)2^nH_n\\
= &\sum_{k=1}^{n-1} (-1)^{k}{n\choose k}2^kH_k
+\sum_{k=1}^{n-1} (-1)^{k-1}{n\choose k-1}2^kH_k+(-1)^{n-1}(n+1)2^nH_n\\
=&S_n+2\sum_{k=1}^{n-1}(-1)^{k-1}{n\choose k-1}2^{k-1}\left(H_{k-1}+
\frac{1}{k}\right)+(-1)^{n-1}(n+1)2^nH_n\\
=&S_n+2\sum_{k=1}^{n-1} (-1)^{k-1}{n\choose k-1}2^{k-1}H_{k-1}
+2\sum_{k=1}^{n-1} \frac{(-1)^{k-1}}{k}{n\choose k-1}2^{k-1}\\
&+(-1)^{n-1}(n+1)2^nH_n\\
=&S_n-2\sum_{k=1}^{n-1} (-1)^{k-1}{n\choose k}2^{k}H_{k}-
2(-1)^{n-1}n2^{n-1}H_{n-1}\\
&+2\sum_{k=1}^{n-1}\frac{(-1)^{k-1}}{k}{n\choose k-1}2^{k-1}
+(-1)^{n-1}(n+1)2^nH_n\\
=&S_n-2S_n+2\sum_{k=1}^{n-1}\frac{(-1)^{k-1}}{k}{n\choose k-1}2^{k-1}\\
&-2(-1)^{n-1}n2^{n-1}\left(H_{n}-\frac{1}{n}\right)+(-1)^{n-1}(n+1)2^nH_n\\
=&-S_n+2\sum_{k=1}^{n-1}\frac{(-1)^{k-1}}{k}{n\choose k-1}2^{k-1}+
(-1)^{n-1}2^n(H_n+1).
 \end{split}\end{equation}
Notice that both equalities (\ref{con43}) from Lemma~\ref{l4.1}   
for any positive integer $n$ can be written as
\begin{equation}\label{con49}
\sum_{k=1}^{n-1}\frac{(-2)^{k-1}}{k}{n\choose k-1}=
(-1)^n\frac{(n-1)2^{n-1}}{n+1}+(1+(-1)^n)\frac{1}{2(n+1)}.
 \end{equation}
Next, substituting (\ref{con49}) into (\ref{con48}) 
multiplied by $(-1)^{n+1}$, we find that 
    \begin{equation}\label{con50}\begin{split}
&(-1)^{n+1}S_{n+1}\\
&=(-1)^nS_n+2(-1)^{n+1}\sum_{k=1}^{n-1}\frac{(-1)^{k-1}}{k}
{n\choose k-1}2^{k-1}+2^n(H_n+1)\\
&=(-1)^nS_n-2\frac{(n-1)2^{n-1}}{n+1}-(1+(-1)^n)\frac{1}{(n+1)}+2^n(H_n+1)\\
&=(-1)^nS_n+\frac{2^{n+1}-1-(-1)^n}{n+1}+2^nH_n.
  \end{split}\end{equation}
\end{proof}
By the induction hypothesis, we have  
 \begin{equation}\label{con51}
(-1)^nS_n=(-1)^{n}\sum_{k=1}^{n-1} (-1)^{k-1}{n\choose k}2^kH_k=
(2^n-2)H_{n-1}+H_{\left[n/2\right]}+\frac{2^n-2}{n}
 \end{equation}
which substituting into (\ref{con50}) gives
  \begin{equation}\label{con52}\begin{split}
&(-1)^{n+1}S_{n+1}\\
&=(2^n-2)H_{n-1}+H_{\left[n/2\right]}+\frac{2^n-2}{n}
+\frac{2^{n+1}-1-(-1)^n}{n+1}+2^nH_n\\
&=(2^n-2)\left(H_n-\frac{1}{n}\right)+2^nH_n+H_{\left[n/2\right]}+
\frac{2^n-2}{n}+\frac{2^{n+1}-1-(-1)^n}{n+1}\\
&=(2^{n+1}-2)H_n+H_{\left[n/2\right]}+\frac{(2^{n+1}-2)+(1+(-1)^n)}{n+1}\\
&=(2^{n+1}-2)H_n+\left(H_{\left[n/2\right]}+\frac{1-(-1)^n}{n+1}\right)
+\frac{2^{n+1}-2}{n+1}\\
&=(2^{n+1}-2)H_n+H_{\left[(n+1)/2\right]}+\frac{2^{n+1}-2}{n+1}.
    \end{split}\end{equation}
This concludes the induction proof.

\begin{proof}[Proof of the congruences
 {\rm(\ref{con5})}, {\rm(\ref{con5.1})},  and  {\rm(\ref{con6})}]
Using the identities ${n\choose k}=\frac{n}{k}{n-1\choose k-1}$
$(1\le k\le n)$, $H_n=H_{n-1}+1/n$  and the congruence 
(\ref{con10}) from Lemma~\ref{l2.1}, 
the left hand side of (\ref{con47}) in Lemma~\ref{l4.2} for $n=p$ is
  \begin{equation}\label{con53}\begin{split}
&\sum_{k=1}^{p-1} (-1)^{k-1}{p\choose k}2^kH_k
=\sum_{k=1}^{p-1}\frac{2^kp}{k}(-1)^{k-1}{p-1\choose k-1}
\left(H_{k-1}+\frac{1}{k}\right)\\
&\equiv \sum_{k=1}^{p-1}\frac{2^kp}{k}(1-pH_{k-1})
\left(H_{k-1}+\frac{1}{k}\right)\pmod{p^3}\\
&=p\sum_{k=1}^{p-1}\frac{2^kH_{k-1}}{k}+
p\sum_{k=1}^{p-1}\frac{2^k}{k^2}-p^2\sum_{k=1}^{p-1}\frac{2^kH_{k-1}^2}{k}-
p^2\sum_{k=1}^{p-1}\frac{2^kH_{k-1}}{k^2}\pmod{p^3}.
   \end{split}\end{equation}
Further, note that by (\ref{con2}) of Theorem~\ref{t1.1}, 
(\ref{con18}) of Lemma~\ref{l2.3} and the identity $H_{k-1}=H_k-1/k$,
  \begin{equation}\label{con54}
\sum_{k=1}^{p-1}\frac{2^kH_{k-1}}{k}=
\sum_{k=1}^{p-1}\frac{2^kH_{k}}{k}-\sum_{k=1}^{p-1}\frac{2^k}{k^2}\equiv
-\frac{13p}{12}B_{p-3}\pmod{p^2}.
 \end{equation}
Taking (\ref{con54}), (\ref{con41}) and (\ref{con18}) of Lemma~\ref{l2.3}
 into (\ref{con53}),
we find that 
  \begin{equation}\label{con55}\begin{split}
\sum_{k=1}^{p-1} (-1)^{k-1}{p\choose k}2^kH_k
\equiv  &-pq_p(2)^2+\frac{2}{3}p^2q_p(2)^3 -\frac{7p^2}{6}B_{p-3}\\
&-p^2\sum_{k=1}^{p-1}\frac{2^kH_{k-1}^2}{k}\pmod{p^3}.
   \end{split}\end{equation}
On the other hand, by (\ref{con47}) of Lemma~\ref{l4.2} with $n=p$,
  \begin{equation}\label{con56}
\sum_{k=1}^{p-1} (-1)^{k-1}{p\choose k}2^kH_k
=(2-2^p)H_{p-1}-H_{(p-1)/2}-2q_p(2).
 \end{equation}
   Furthermore, since by Wolstenholme's theorem and Fermat little theorem, 
$p^3\mid H_{p-1}(2-2^p)$, taking this and  (\ref{con14}) of Lemma~\ref{l2.2} 
into (\ref{con56}) we get 
  \begin{equation}\label{con57}
\sum_{k=1}^{p-1} (-1)^{k-1}{p\choose k}2^kH_k\equiv 
-pq_p(2)^2+\frac{2}{3}p^2q_p(2)^3 +\frac{7p^2}{12}B_{p-3}
\pmod{p^3}.
   \end{equation}
Now substituting (\ref{con57}) into (\ref{con55}), we obtain 
   $$ 
p^2\sum_{k=1}^{p-1}\frac{2^kH_{k-1}^2}{k}\equiv 
-\frac{7p^2}{4}B_{p-3}\pmod{p^3},
   $$
or equivalently,
   \begin{equation}\label{con58} 
\sum_{k=1}^{p-1}\frac{2^kH_{k-1}^2}{k}\equiv 
-\frac{7}{4}B_{p-3}\pmod{p}.
    \end{equation}
Finally, applying (\ref{con58}), (\ref{con41}) and (\ref{con21})
of Lemma~\ref{l2.3}, we have
    \begin{equation*}\begin{split}
\sum_{k=1}^{p-1}\frac{2^kH_k^2}{k}
&=\sum_{k=1}^{p-1}\frac{2^k\left(H_{k-1}+\frac{1}{k}\right)^2}{k}\\
&= \sum_{k=1}^{p-1}\frac{2^kH_{k-1}^2}{k}+
2\sum_{k=1}^{p-1}\frac{2^kH_{k-1}}{k^2}+\sum_{k=1}^{p-1}\frac{2^k}{k^3}\\
&\equiv -\frac{1}{3}q_p(2)^3+\frac{11}{24}B_{p-3}\pmod{p}.   
  \end{split}\end{equation*}
This is in fact the congruence (\ref{con5}).

In order to prove the congruence (\ref{con5.1}), 
notice that by (\ref{con28}) $H_{p-k}\equiv H_{k-1}\,(\bmod{\,p})$
for each $k=1,2,\ldots , p-1$. Hence,  using this,   the congruence 
(\ref{con58}), Fermat little theorem, and applying  
(\ref{con5}), (\ref{con3}) and (\ref{con21}),
we find that
   \begin{equation*}\begin{split}
\sum_{k=1}^{p-1}\frac{H_k^2}{k\cdot 2^k}
&=\sum_{k=1}^{p-1}\frac{H_{p-k}^2}{(p-k)\cdot 2^{p-k}}
\equiv\sum_{k=1}^{p-1}\frac{H_{k-1}^2}{(-k)\cdot 2^{1-k}}
\pmod{p}\\
&=-\frac{1}{2}\sum_{k=1}^{p-1}\frac{2^kH_{k-1}^2}{k}
=-\frac{1}{2}\sum_{k=1}^{p-1}\frac{2^k\left(H_k-\frac{1}{k}\right)^2}{k}\\
&=-\frac{1}{2}\sum_{k=1}^{p-1}\frac{2^kH_k^2}{k}+
\sum_{k=1}^{p-1}\frac{2^kH_k}{k^2}-
\frac{1}{2}\sum_{k=1}^{p-1}\frac{2^k}{k^3}\equiv\frac{7}{8}B_{p-3}\pmod{p}.
  \end{split}\end{equation*}
This is in fact (\ref{con5.1}).

For establishing the congruence (\ref{con6}), 
observe that by (\ref{con9}) of Lemma~\ref{l2.1},
  $$
p^2H_{k,2}\equiv 2-2pH_k+p^2H_k^2-2(-1)^k{p-1\choose k}\pmod{p^3},
  $$
whence we have
 \begin{equation}\label{con59}\begin{split}
p^2\sum_{k=1}^{p-1}\frac{2^kH_{k,2}}{k}
\equiv & 2\sum_{k=1}^{p-1}\frac{2^k}{k}
-2p\sum_{k=1}^{p-1}\frac{2^kH_k}{k}+p^2\sum_{k=1}^{p-1}\frac{2^kH_k^2}{k}\\
& -2\sum_{k=1}^{p-1}\frac{(-2)^k}{k}{p-1\choose k}\pmod{p^3}.
 \end{split}\end{equation}
Finally, substituting the congruences (\ref{con17}) of Lemma~\ref{l2.3}, 
(\ref{con2}), (\ref{con5}) of Theorem~\ref{t1.1} and 
(\ref{con38}) of Lemma~\ref{l3.2} into (\ref{con59}), we obtain
  $$
p^2\sum_{k=1}^{p-1}\frac{2^kH_{k,2}}{k}\equiv -\frac{1}{3}p^2q_p(2)^3
-\frac{25}{24}p^2B_{p-3}\pmod{p^3}
   $$ 
from which we get 
 $$
\sum_{k=1}^{p-1}\frac{2^kH_{k,2}}{k}\equiv -\frac{1}{3}q_p(2)^3
-\frac{25}{24}B_{p-3}\pmod{p}.
 $$
This is the congruence (\ref{con6}), and the proof is completed.
\end{proof}

% Section 5
\section{Proof of Corollary~\ref{c1.2}}

% Lemma 5.1
\begin{lemma}\label{l5.1} If $p> 3$ is a prime, then
 \begin{equation}\label{con60}
 \sum_{1\le k\le i\le p-1}\frac{2^k}{ik^2}\equiv -\frac{5}{4}B_{p-3}\pmod{p},
   \end{equation}
  \begin{equation}\label{con61}
   \sum_{1\le k\le i\le p-1}\frac{2^k}{i^2k}\equiv \frac{3}{4}B_{p-3}\pmod{p},
     \end{equation}
   \begin{equation}\label{con62}
\sum_{1\le k\le i\le p-1}\frac{2^k}{ik}\equiv\frac{13}{12}pB_{p-3}\pmod{p^2}.
   \end{equation}
\end{lemma}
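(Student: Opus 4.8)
The plan is to handle all three double sums by the elementary device already used in the proof of (\ref{con2}): separate the \emph{diagonal} contribution $k=i$ from the part with $k<i$, and in that part interchange the order of summation so that the inner sum becomes a tail $\sum_{i=k+1}^{p-1}1/i^{t}$ of a harmonic sum of order $t$. Since $p\mid H_{p-1}$ and $p\mid H_{p-1,2}$ (Wolstenholme's theorem, or (\ref{con11}) and (\ref{con12})), each such tail may be replaced by $-H_{k,t}$ — modulo $p$ when $t=2$, and even modulo $p^2$ when $t=1$, again by Wolstenholme — and what remains is a combination of one-fold sums all of which have already been evaluated in the paper.

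Carrying this out for (\ref{con60}), I would write
$$\sum_{1\le k\le i\le p-1}\frac{2^k}{ik^2}=\sum_{k=1}^{p-1}\frac{2^k}{k^3}+\sum_{k=1}^{p-1}\frac{2^k}{k^2}\sum_{i=k+1}^{p-1}\frac1i\equiv\sum_{k=1}^{p-1}\frac{2^k}{k^3}-\sum_{k=1}^{p-1}\frac{2^kH_k}{k^2}\pmod p,$$
using $\sum_{i=k+1}^{p-1}1/i=H_{p-1}-H_k\equiv-H_k\pmod p$. Now (\ref{con21}) of Lemma~\ref{l2.3} and (\ref{con3}) of Theorem~\ref{t1.1} turn the right-hand side into $\bigl(-\tfrac13q_p(2)^3-\tfrac7{24}B_{p-3}\bigr)-\bigl(-\tfrac13q_p(2)^3+\tfrac{23}{24}B_{p-3}\bigr)=-\tfrac54B_{p-3}$, which is (\ref{con60}); the $q_p(2)^3$ terms cancel, a convenient consistency check. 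For (\ref{con61}) the same splitting gives
$$\sum_{1\le k\le i\le p-1}\frac{2^k}{i^2k}=\sum_{k=1}^{p-1}\frac{2^k}{k^3}+\sum_{k=1}^{p-1}\frac{2^k}{k}\bigl(H_{p-1,2}-H_{k,2}\bigr)\equiv\sum_{k=1}^{p-1}\frac{2^k}{k^3}-\sum_{k=1}^{p-1}\frac{2^kH_{k,2}}{k}\pmod p,$$
using $H_{p-1,2}\equiv0\pmod p$ from (\ref{con12}); applying (\ref{con21}) of Lemma~\ref{l2.3} and (\ref{con6}) of Theorem~\ref{t1.1} now gives $\bigl(-\tfrac13q_p(2)^3-\tfrac7{24}B_{p-3}\bigr)-\bigl(-\tfrac13q_p(2)^3-\tfrac{25}{24}B_{p-3}\bigr)=\tfrac34B_{p-3}$, which is (\ref{con61}), the cubic Fermat-quotient terms again dropping out.

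The congruence (\ref{con62}) is the only one demanded modulo $p^2$, so here I would invoke Wolstenholme's theorem in the form $H_{p-1}\equiv0\pmod{p^2}$, giving $\sum_{i=k+1}^{p-1}1/i\equiv-H_k\pmod{p^2}$; the off-diagonal step is then literally the computation (\ref{con31}). Thus
$$\sum_{1\le k\le i\le p-1}\frac{2^k}{ik}=\sum_{k=1}^{p-1}\frac{2^k}{k^2}+\sum_{k=1}^{p-1}\frac{2^k}{k}\sum_{i=k+1}^{p-1}\frac1i\equiv\sum_{k=1}^{p-1}\frac{2^k}{k^2}-\sum_{k=1}^{p-1}\frac{2^kH_k}{k}\pmod{p^2},$$
and inserting the sharp congruences (\ref{con18}) of Lemma~\ref{l2.3} and (\ref{con2}) of Theorem~\ref{t1.1} yields $\bigl(-q_p(2)^2+p(\tfrac23q_p(2)^3+\tfrac76B_{p-3})\bigr)-\bigl(-q_p(2)^2+\tfrac23pq_p(2)^3+\tfrac p{12}B_{p-3}\bigr)=\tfrac{13}{12}pB_{p-3}$, which is (\ref{con62}). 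The main point requiring attention is precisely this last step: one must keep the complete mod-$p^2$ expansions of $\sum 2^k/k^2$ and $\sum 2^kH_k/k$ rather than their mod-$p$ reductions, and then observe that both the $q_p(2)^2$ contribution and the $p\,q_p(2)^3$ contribution cancel, leaving a clean rational multiple of $pB_{p-3}$. Everything else is routine.
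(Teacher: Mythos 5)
Your proposal is correct and follows essentially the same route as the paper: split each double sum into a diagonal part and a tail $\sum_{i>k}1/i^{t}$, annihilate the tail using $H_{p-1}\equiv 0\pmod{p^2}$ resp.\ $H_{p-1,2}\equiv 0\pmod p$, and then insert the single-sum congruences already proved, exactly as the paper does for (\ref{con61}) and (\ref{con62}) (where it uses (\ref{con6}), (\ref{con21}), (\ref{con2}) and (\ref{con18}) in the same way). The only deviation is in (\ref{con60}): the paper re-derives the needed value of $\sum_{k=1}^{p-1}2^kH_{k-1}/k^2$ by the binomial-coefficient manipulation with (\ref{con10}) of Lemma~\ref{l2.1}, the identity ${p-1\choose k-1}=\frac{k}{p}{p\choose k}$ and (\ref{con37}) of Lemma~\ref{l3.2}, whereas you simply quote (\ref{con3}) of Theorem~\ref{t1.1} together with (\ref{con21}); since (\ref{con3}) is established in Section~3, this is legitimate, slightly shorter, and your arithmetic in all three cases checks out.
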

\begin{proof} Since 
$H_{p-1}\equiv 0 \,(\bmod{\,p^2})$ (the well known 
Wolstenholme's theorem), and thus for each $k=1,2,\ldots p-1$,
   \begin{equation}\label{con63}
\sum_{i=k}^{p-1}\frac{1}{i}\equiv -\sum_{i=1}^{k-1}\frac{1}{i}
=-H_{k-1}\pmod{p^2}.
     \end{equation}
Applying (\ref{con63}), (\ref{con10}) of Lemma~\ref{l2.1} and taking the 
identity ${p-1\choose k-1}=
\frac{k}{p}{p\choose k}$,  we find that
 \begin{equation}\label{con64}\begin{split}
 p\sum_{1\le k\le i\le p-1}\frac{2^k}{ik^2}
&=\sum_{k=1}^{p-1}\frac{2^k}{k^2}\sum_{i=k}^{p-1}\frac{p}{i}\equiv 
-\sum_{k=1}^{p-1}\frac{2^k}{k^2}pH_{k-1}\pmod{p^2}\\
&\equiv 
-\sum_{k=1}^{p-1}\frac{2^k}{k^2}\left(1-(-1)^{k-1}{p-1\choose k-1}\right)\\
&=-\sum_{k=1}^{p-1}\frac{2^k}{k^2}-\sum_{k=1}^{p-1}\frac{(-2)^k}{k^2}
{p-1\choose k-1}\\
&=-\frac{1}{p}\sum_{k=1}^{p-1}\frac{2^k}{k^2}-\sum_{k=1}^{p-1}\frac{(-2)^k}{k}
{p\choose k}.
 \end{split}\end{equation}
Finally, taking the congruence 
(\ref{con18})  of Lemma~\ref{l2.3} and (\ref{con37})
of Lemma~\ref{l3.2} into the right hand side of (\ref{con64}), 
we immediately obtain (\ref{con60}).

Further, from (\ref{con12}) of Lemma~\ref{l2.2} we see that 
$H_{p-1,2}\equiv 0 \,(\bmod{\,p})$ and therefore, for each 
$k=1,2,\ldots p-1$, 
   $$
\sum_{i=k}^{p-1}\frac{1}{i^2}\equiv -\sum_{i=1}^{k-1}\frac{1}{i^2}
=-H_{k-1,2}\pmod{p}.
  $$
Applying this  we obtain
 \begin{equation}\label{con65}\begin{split}
 \sum_{1\le k\le i\le p-1}\frac{2^k}{i^2k}
&=\sum_{k=1}^{p-1}\frac{2^k}{k}\sum_{i=k}^{p-1}\frac{1}{i^2}\equiv 
-\sum_{k=1}^{p-1}\frac{2^kH_{k-1,2}}{k}\pmod{p}.
    \end{split}\end{equation}
Further, taking $H_{k-1,2}=H_{k,2}-1/k^2$,
by  (\ref{con5}) of Theorem~\ref{t1.1} and (\ref{con21}) of Lemma~\ref{l2.3}, we get
   \begin{equation}\label{con66}\begin{split}    
\sum_{k=1}^{p-1}\frac{2^kH_{k-1,2}}{k}=
\sum_{k=1}^{p-1}\frac{2^kH_{k,2}}{k}-
\sum_{k=1}^{p-1}\frac{2^k}{k^3}\equiv -\frac{3}{4}B_{p-3}\pmod{p}.
    \end{split}\end{equation}
Inserting (\ref{con66}) into (\ref{con65}) we obtain (\ref{con61}). 

Finally, by  (\ref{con63}) we have 
  \begin{equation*}\begin{split}
 \sum_{1\le k\le i\le p-1}\frac{2^k}{ik}
&=\sum_{k=1}^{p-1}\frac{2^k}{k}\sum_{i=k}^{p-1}\frac{1}{i}\equiv 
-\sum_{k=1}^{p-1}\frac{2^k}{k}H_{k-1}\pmod{p^2}\\
&=-\sum_{k=1}^{p-1}\frac{2^kH_k}{k}+\sum_{k=1}^{p-1}\frac{2^k}{k^2}\pmod{p^2}
  \end{split} \end{equation*}  
whence substituting the congruences 
(\ref{con3}) of Theorem~\ref{t1.1}  and (\ref{con18}) of Lemma~\ref{l2.3},
we obtain (\ref{con62}).
    \end{proof}

We are now ready to prove the congruence 
(\ref{con7}) from Corollary~\ref{c1.2} conjectured by Z. W. Sun.

\begin{proof}[Proof of the congruence  {\rm(\ref{con7})}]
Since $1/(p-k)\equiv -(p+k)/k^2\,(\bmod{\,p^2})$,
we find that
 \begin{equation}\label{con67}\begin{split}
2^p\sum_{k=1}^{p-1}\frac{H_k}{k\cdot 2^k}
=&2^p \sum_{k=1}^{p-1}\frac{1}{k\cdot 2^k}
\sum_{i=1}^{k}\frac{1}{i}=2^p\sum_{1\le i\le k\le p-1}
\frac{1}{ik\cdot 2^k}\\
=&2^p\sum_{1\le p-i\le p-k\le p-1}\frac{1}{(p-i)(p-k)2^{p-k}}\\
\equiv & \sum_{1\le k\le i\le p-1}\frac{(p+i)(p+k)2^k}{i^2k^2}\pmod{p^2}\\
\equiv& \sum_{1\le k\le i\le p-1}\frac{(pi+pk+ik)2^k}{i^2k^2}\pmod{p^2}\\
=&p\left(\sum_{1\le k\le i\le p-1}\frac{2^k}{ik^2}+
\sum_{1\le k\le i\le p-1}\frac{2^k}{i^2k}\right)\\
&+\sum_{1\le k\le i\le p-1}\frac{2^k}{ik}\pmod{p^2}.
   \end{split}\end{equation}
The substitution of congruences 
(\ref{con60})--(\ref{con62}) of Lemma~\ref{l5.1} into 
(\ref{con67}) immediately yields 
 \begin{equation*}
2^p\sum_{k=1}^{p-1}\frac{H_k}{k\cdot 2^k}\equiv 
\frac{7}{12}pB_{p-3}\pmod{p^2}, 
  \end{equation*}
whence because of  Fermat little theorem  $2^{-p}\equiv 2^{-1}\,(\bmod{\,p})$, 
we obtain 
 \begin{equation*}
\sum_{k=1}^{p-1}\frac{H_k}{k\cdot 2^k}\equiv   
2^{-p}\frac{7}{12}pB_{p-3}\equiv \frac{7}{24}pB_{p-3} \pmod{p^2}, 
  \end{equation*}
as desired.
\end{proof}
\begin{proof}[Proof of the congruence {\rm(\ref{con7.1})}]
By (\ref{con9}) of Lemma~\ref{l2.1},
  $$
p^2H_{k,2}\equiv 2-2pH_k+p^2H_k^2-2(-1)^k{p-1\choose k}\pmod{p^3},
  $$
whence we have
 \begin{equation}\label{con59.1}\begin{split}
p^2\sum_{k=1}^{p-1}\frac{H_{k,2}}{k\cdot 2^k }
\equiv & 2\sum_{k=1}^{p-1}\frac{1}{k\cdot 2^k}
-2p\sum_{k=1}^{p-1}\frac{H_k}{k\cdot 2^k}+p^2\sum_{k=1}^{p-1}
\frac{H_k^2}{k\cdot 2^k}\\
&-2\sum_{k=1}^{p-1}\frac{(-1)^k}{k\cdot 2^k}{p-1\choose k}\pmod{p^3}.
 \end{split}\end{equation}
Taking $n=p-1$ and $x=2$ into (\ref{con36}) from the proof of 
Lemma~\ref{l3.1}, we obtain 
  \begin{equation}\label{con59.2}
\sum_{k=1}^{p-1}\frac{(-1)^k}{k\cdot 2^k}{p-1\choose k}=
\sum_{k=1}^{p-1}\frac{1}{k\cdot 2^k}-H_{p-1}.
  \end{equation}
Substituting (\ref{con59.2}) into (\ref{con59.1}) yields
  \begin{equation}\label{con59.3}
p^2\sum_{k=1}^{p-1}\frac{H_{k,2}}{k\cdot 2^k }
\equiv  -2p\sum_{k=1}^{p-1}\frac{H_k}{k\cdot 2^k}+p^2\sum_{k=1}^{p-1}
\frac{H_k^2}{k\cdot 2^k}+2H_{p-1}\pmod{p^3}.
 \end{equation}
Finally, substituting the congruences 
(\ref{con7}) of Corollary~\ref{c1.2}, (\ref{con5.1}) of 
Theorem~\ref{t1.1} and (\ref{con11}) of 
Lemma~\ref{l2.2} into (\ref{con59.3}), we obtain
  $$
p^2\sum_{k=1}^{p-1}\frac{H_{k,2}}{k\cdot 2^k}\equiv 
-\frac{3}{8}p^2B_{p-3}\pmod{p^3}
   $$ 
whence it follows that 
 $$
\sum_{k=1}^{p-1}\frac{H_{k,2}}{k\cdot 2^k}\equiv -\frac{3}{8}B_{p-3}\pmod{p}.
 $$
This is the congruence (\ref{con7.1}), and the proof is completed.
\end{proof}


\begin{thebibliography}{99} 
\bibitem{a} E. Alkan,   Variations on Wolstenholme's theorem,
{\it Amer. Math. Monthly} {\bf 101} (1994), 1001-1004. 


\bibitem{b} M. Bayat,  A generalization of Wolstenholme's Theorem,
{\it Amer. Math. Monthly} {\bf 104} (1997), 557--560. 


\bibitem{ds} K. Dilcher and L. Skula, The cube of the Fermat quotient,
{\it Integers} {\bf 6} (2006), \# A24.

\bibitem{gr} A. Granville,  Arithmetic properties of binomial 
coefficients. I.  Binomial coefficients modulo prime powers, 
in:  {\it Organic Mathematics} (Burnaby, BC, 1995), pp. 253--276,
 CMS Conf. Proc., 20, American  Mathematical Society, Providence, RI, 1997. 


\bibitem{l}  E. Lehmer,  On congruences 
involving Bernoulli numbers and the quotients of
Fermat and Wilson,  {\it Ann. Math.} {\bf 39} (1938), 350--360. 


\bibitem{m} R. Me\v{s}trovi\'c,  A proof of a 
 conjecture by Sun on congruence for harmonic numbers, 
 preprint {\tt arXiv: 1108.1171v1 [math.NT]} (2011).


\bibitem{s1} Z. H. Sun,  Congruences concerning Bernoulli numbers
and Bernoulli polynomials,   {\it Discrete Appl. Math.} {\bf 105} (2000),
193--223.


\bibitem{s2} Z. H. Sun, Congruences involving Bernoulli and Euler numbers, 
 {\it J. Number Theory} {\bf 128} (2008), 280--312.

\bibitem{s} Z. W. Sun,  Arithmetic theory of harmonic  numbers,  
{\it Proc. Amer. Math. Soc.}, article in press, S 0002-9939(2011)10925-0;
preprint {\tt arXiv: 0911.4433v6 [math.NT]} (2009).


\bibitem{sz} Z. W. Sun and L. L. Zhao,  
Arithmetic theory of harmonic  numbers (II),  
preprint {\tt arXiv: 0911.4433v6 [math.NT]} (2011).


 
\bibitem{st} Z. W. Sun and R. Tauraso, 
New congruences for central binomial coefficients, 
{\it Adv. in Appl. Math.} {\bf 45} (2010), 125--148.



\bibitem{t} R. Tauraso,  New harmonic number identities with 
applications,  {\it S\'{e}minaire Lotharingien de Combinatoire}
 {\bf 63} (2010), Article B63g.


\bibitem{zs} L. L. Zhao and Z. W. Sun,
 Some curious congruences modulo primes, 
 {\it J. Number Theory} {\bf 130} (2010), 930--935.


\end{thebibliography}
\end{document}